\newtheorem{thm}{Theorem}
\newtheorem{cor}{Corollary}
\newtheorem{lem}{Lemma}
\newtheorem{nota}{Notation}
\newtheorem{dfn}{Definition}
\newtheorem{pro}{Proposition}
\newtheorem{rem}{Remark}
\begin{document}

\title{On dominating graph of graphs, median graphs and partial cubes, and graphs in which complement of every minimal dominating set is minimal dominating 
}
	




\author{Alireza Mofidi} 

\institute{Alireza Mofidi
	\at
	$^1$ Department of Mathematics and Computer Science, Amirkabir University of Technology (Tehran Polytechnic), Iran,
	\and
	$^2$ School of Mathematics, Institute for Research in Fundamental Sciences {\rm (IPM)},
	P.O. Box {\rm 19395-5746}, Tehran, Iran. \\
	\\            
	\email{mofidi@aut.ac.ir}
}


\maketitle

\begin{abstract}
The dominating graph of a graph $G$ is a graph whose vertices correspond to the dominating sets of $G$ and two vertices are adjacent whenever their corresponding dominating sets differ in exactly one vertex.
Studying properties of dominating graph has become an increasingly interesting subject in domination theory.
On the other hand, median graphs and partial cubes are two fundamental graph classes
in graph theory.
In this paper, we make some new connections between
domination theory
and the theory of median graphs and partial cubes.
As the main result,
we show that the following conditions are equivalent for every graph $G \not \simeq C_4$ with no isolated vertex,
and in particular, that the simple third
condition completely characterizes first two ones in which three concepts of dominating graphs, median graphs and complement of minimal dominating sets get related:

\vspace{1mm}

- The dominating graph of $G$ is a median graph,

- The complement of every minimal dominating set of $G$ is a minimal dominating set,

- Every vertex of $G$ is either of degree $1$ or adjacent to a vertex of degree $1$.

\vspace{1mm}

As another result, we prove that the dominating graph of every graph is a partial cube and
also give some examples to show that not all partial cubes or median graphs are isomorphic to the dominating graph of a graph.
The above-mentioned results, as another highlight of the paper, 
provide novel infinite sources of examples of median graphs and partial cubes.

\vspace{3mm}

\keywords{Dominating graph of graphs \and median graphs \and partial cubes \and complement of minimal dominating sets \and minimal dominating sets}


\subclass{05C69 \and 05C12 \and 05C75}

\end{abstract}

\section{Introduction}\label{sectionintroduction}
In this paper we consider finite, simple and undirected graphs.
For a graph $G$, we say that a subset $D \subseteq V(G)$ {\em dominates} a vertex $v \in V(G)$ if either $v$ belongs to $D$ or is adjacent to some vertex in $D$.
We say that a vertex $v$ dominates a vertex $w$ if $\{v\}$ dominates $w$.
By a {\em dominating set} of $G$, we mean a $D \subseteq V(G)$ that dominates every vertex in $G$.
By a {\em minimal dominating set (MDS)} of $G$, we mean a dominating set $D$ such that for every $x \in D$, the set $D-\{x\}$ is not a dominating set anymore.
By a {\em minimum dominating set}, we mean a dominating set of smallest size in $G$. The size of a minimum dominating set is called the {\em domination number} of $G$. Note that a minimum dominating set is a minimal dominating set, but the converse does not necessarily hold.

\vspace{0.75mm}

The concept of the $k$-dominating graph of a graph was introduced by Haas and Seyffarth in  \cite{HaasSeyffarthThekdominatinggraph} as follows:

\begin{dfn}\label{defkdominatinggraph}
	Let $G$ be a graph and $k$ a positive integer. By the {\em $k$-dominating graph} of $G$, denoted by $D_k(G)$, we mean the graph whose vertices correspond to (and are identified with) the dominating sets of $G$ that have cardinality less than or equal $k$. Also two vertices
	in the graph $D_k(G)$ are adjacent if and only if their corresponding dominating sets in $G$ differ by exactly one element
	(equivalently speaking, their symmetric difference has size equal to $1$).
	We call $D_{|G|}(G)$ the {\em dominating graph} of $G$ and for simplicity,
	if there is no danger of confusion,
	use the notation $D(G)$ for it.
\end{dfn}

Historically, one of the main motivations of the above definition was its application in the
study of the well-known dominating set reconfiguration (DSR) problem. This problem asks whether two given dominating sets $A$ and $B$ of size at most $k$ of a graph, there exists a sequence of dominating sets of the graph starting from $A$ and ending with $B$, such that each dominating set in the sequence is of size at most $k$ and can be obtained from the previous one by either adding or deleting exactly one vertex
(see for instance \cite{SuzukiMouawadNishimuraReconfigurationofdominatingsets} for more information about
the DSR problem).
It is fairly easy to observe that the DSR problem 
can be naturally translated in terms of the connectivity properties of the $k$-dominating graphs by simply asking that
whether two given vertices of $D_k(G)$ belong to the same connected component of $D_k(G)$ or not.
Due to this fact, the connectivity aspects of the dominating graph and also some other aspects of
the DSR problem
became the subject of interest
in papers such as 
\cite{HaasSeyffarthThekdominatinggraph,HaasSeyffarthReconfiguringdominatingsetsinsomewellcoveredandotherclassesofgraphs,MynhardtRouxTeshimaConnectedkdominatinggraphs} and
\cite{SuzukiMouawadNishimuraReconfigurationofdominatingsets}.
Beside connectivity features as one of the initial motivations, several other aspects of the $k$-dominating and dominating graphs
were further explored in various works such as
\cite{AdarichevaBozemanClarkeHaasMessingerSeyffarthSmithReconfigurationgraphsfordominatingsets2021,AlikhaniFatehiKlavzarOntheStructureofDominatingGraphs,HaasSeyffarthReconfiguringdominatingsetsinsomewellcoveredandotherclassesofgraphs,MynhardtRouxTeshimaConnectedkdominatinggraphs,MynhardtNasserasrReconfigurationofcolouringsanddominatingsetsingraphs,SuzukiMouawadNishimuraReconfigurationofdominatingsetsinCoCOON,SuzukiMouawadNishimuraReconfigurationofdominatingsets}.
In fact, the question of studying or characterizing graphs whose dominating graphs possess certain given properties has become an increasingly interesting research topic in domination theory of graphs.
For instance, problems such as characterization of graphs which are isomorphic to their dominating graphs,
or studying graphs which are isomorphic to some dominating graph, or investigation of connected graphs with regular dominating graph, or inquiring the existence of Hamilton paths in the dominating graph, etc,
have all been considered in numerous works including the above-mentioned papers.
Also the study of $k$-dominating and dominating graphs was further
related to similar studies in graph colorings,
independent sets, cliques and vertex covers in several works such as
\cite{FindingpathsbetweengraphcolouringsPSPACEcompletenessandsuperpolynomialdistances,CerecedavandenHeuvelJohnsonFindingpathsbetween3colorings,CerecedavandenHeuvelJohnsonConnectednessofthegraphofvertexcolourings,ItoDemaineHarveyPapadimitriouSideriUeharaUnoOnthecomplexityofreconfigurationproblems,ItoKaminskiDemaineReconfigurationoflistedgecoloringsinagraph}.
It is worth mentioning that a concept similar to dominating graphs but for total dominating sets was worked out in \cite{AlikhaniFatehiMynhardtOnktotaldominatinggraphs}.

\vspace{0.75mm}

Now we talk about the second area of graph theory that this paper deals with.
Median graphs and partial cubes,
as defined below, are two fundamental and central classes of graphs appearing across several major research areas in graph theory and network theory, such as metric graph theory, graph products, graph embedding problems, study of hypercubes, interconnection networks, etc.

\begin{dfn}\label{defMediangraph}
	By a {\em median graph} we mean an undirected graph with the property that every three vertices $a, b$ and $c$ of it have a unique median, where by a median of $a, b$ and $c$, we mean a vertex that belongs to the shortest paths between each pair of $a, b$ and $c$.
\end{dfn}

\begin{dfn}\label{defPartialCube}
	A {\em partial cube} is
	a graph $G$ that can be isometrically embedded in a hypercube,
	meaning that
	$G$ can be identified with a subgraph of a hypercube in such a way that the distance between any two vertices in $G$ is the same as the distance between those two vertices in the hypercube.
	Equivalently,
	a partial cube is a graph $G$ whose vertices can be labeled with subsets of a fix set
	in such a way that the distance between any two vertices in $G$ is equal to the Hamming distance (i.e. the size of the symmetric difference) between their labels.
\end{dfn}

The classes of median graphs and partial cubes have
been studied for more than half a century in several contexts 
and nowadays are still very active areas of research.
The reason that these classes considered central graph classes in graph theory
is due to their appearances in a wide range of areas of mathematics and computer science as well as their very rich combinatorial structure.
Historically, median graphs
arose in universal algebra
in \cite{AvannMetricternarydistributivesemilattices} and \cite{BirkhoffandKissAternaryoperationindistributivelattices}, and then were developed extensively in combinatorics and many other domains. 
To see more details on median graphs and their connections with other discrete and geometric structures,
interested reader can refer to the surveys written by Bandelt-Chepoi 
\cite{BandeltChepoiMetricgraphtheoryandgeometryasurvey},
or Klav\v{z}ar-Mulder \cite{KlavzarMulderMediangraphscharacterizationslocationtheoryandrelatedstructures},
or Chalopin et al. \cite{ChalopinChepoiHiraiOsajdaWeaklymodulargraphsandnonpositivecurvature},
or the books
\cite{bookmediatheory,BookHammackImrichKlavzarHandbookofProductGraphs,BookImrichKlavzarProductGraphs,BookKnuthTheArtofComputerProgramming,bookpartialcubes}
which cover the topic in substantial detail.
Also a few of the earlier books on this area are
\cite{BookFederStableNetworksandProductGraphs,BookMulderTheIntervalFunctionofaGraph,NebeskyMediangraphs}
and
\cite{BookvandeVelTheoryofConvexStructures}.

\vspace{0.75mm}

On the other hand, partial cubes
were introduced by Graham and Pollak in \cite{GrahamPollak} for modeling interconnection networks and were scrutinized widely afterwards in various works such as \cite{Chepoi,Djokovic,EppsteinThelatticedimension,KlavzarShpectorovTribesofcubicpartialcubes,OvchinnikovPartialcubesStructurescharacterizations,WinklerIsometricembeddingsinproductsofcompletegraphs}.
Also in \cite{MofidiOnpartialcubeswellgradedfamiliesandtheirdualswithsomeapplicationsingraphs}, the author of the present paper studied numerous aspects of partial cubes from the point of view of (hypergraph) dualities.
For a comprehensive account of known results on partial cubes, the interested reader can see the books
\cite{bookmediatheory,BookHammackImrichKlavzarHandbookofProductGraphs,BookImrichKlavzarProductGraphs} and \cite{bookpartialcubes}.
Nowadays, the investigation of partial cubes and median graphs has found many applications in different areas of combinatorics and established strong links to other fields of mathematics as well as computer science.
Note that median graphs and partial cubes are closely related to each other. Indeed, every median graph is a partial cube, 
but the converse is not necessarily true since for instance, the cycle graph $C_6$ is a partial cube but is not a median graph.

\vspace{0.75mm}

The third concept that this paper concerns with is the complement of minimal dominating sets.
The idea of studying complements of dominating sets, in particular
when those complements are also dominating sets,
has a long history in
the literature of domination theory and
has been a question of great interest
in many works such as
\cite{domkeDunbarMarkusTheinversedominationnumberofagraph,frendrupHenningOnaconjectureaboutinversedominationingraphs,HenningLowensteinRautenbachRemarksaboutdisjointdominatingsets2009,johnsonPrierWalshHaynesOnaproblemofDomkeDunbarHaynesHedetniemiandMarkusconcerningtheinversedominationnumber,KropMcDonaldPuleoUpperboundsforinversedominationingraphs2021,OreTheoryofgraphs}
and in particular, the thesis
\cite{lowenstein2010complementInthecomplementofadominatingsetPhDThesis}.
It was known (by an observation in \cite{OreTheoryofgraphs}) that the complement of every minimal dominating set in a graph with no isolated vertex is a dominating set. However, in general it might not be a minimal dominating set.

\vspace{0.75mm}

During the course of our investigation in this paper, we study dominating graphs of graphs and make some new connections between domination theory and the theory of median graphs and partial cubes, in particular, between the three above-mentioned research areas in graph theory, namely, dominating graph of graphs and complement of dominating sets, both from domination theory side, and the classes of median graphs and partial cubes. 
We delve into analysing the structure of dominating graphs and consider, roughly speaking, the general problem of characterizing their nice properties from the perspective of median and partial cube properties, and also connect these ideas to the independent subject of the complement of dominating sets. As the main result of the paper, in Theorem \ref{thmDMiffMDScompMDSiffENLOAL},
we completely characterize graphs whose dominating graphs have the celebrated property of being median (we call them DM-graphs), and moreover,
fully characterize (in the same theorem combined with Corollary \ref{characterizationofMDScoMDSgraphs}) graphs with the property that the complement of every minimal dominating set is again a minimal dominating set (we call such graphs MDScoMDS graphs).
More precisely, in this theorem, we show that the dominating graph of a graph $G \not \simeq C_4$ with no isolated vertex is a median graph if and only if the complement of every minimal dominating set of graph is a minimal dominating set, 
and furthermore, both of these conditions are equivalent to and are completely characterized by a simple third condition saying that
every vertex of graph is either a leaf (of degree $1$) or adjacent to a leaf. 

\vspace{0.75mm}

As another highlight of the paper, in Theorem \ref{Dominatinggraphsarepartialcube}, we prove that the dominating graph of every graph is a partial cube (and also give some examples to show that not all partial cubes or median graphs are isomorphic to the dominating graph of a graph).
Finding new sources of examples of median graphs and partial cubes
has always been a topic of interest and was
considered extensively 
in the literature (see references on median graphs and partial cubes cited 
earlier).
The above-discussed results of this paper in theorems \ref{thmDMiffMDScompMDSiffENLOAL} and \ref{Dominatinggraphsarepartialcube}, beside characterizing the mentioned properties in graphs, furthermore, as another high point of the paper, provide us with some new infinite sources of examples of median graphs and partial cubes since, as mentioned, the dominating graph of every graph 
in which any vertex is either leaf or adjacent to a leaf gives rise to a median graph, and also the dominating graph of every graph gives rise to a partial cube.
Also recall that these results, as a consequence, help to elaborate relationships between domination theory and metric graph theory, since median graphs and partial cubes which get related to dominating graphs in these theorems, constitute two of the most important graph classes in metric graph theory.

\vspace{0.75mm}

Finally, it should be mentioned that as a corollary of the 
aforestated
characterizations of Theorem \ref{thmDMiffMDScompMDSiffENLOAL}, we make another connection to the
subject of the complement of dominating sets by concluding
a relationship between
median property of dominating graphs and the well-known notion of inverse domination number (see
\cite{domkeDunbarMarkusTheinversedominationnumberofagraph,frendrupHenningOnaconjectureaboutinversedominationingraphs,johnsonPrierWalshHaynesOnaproblemofDomkeDunbarHaynesHedetniemiandMarkusconcerningtheinversedominationnumber,KulliSigarkantiInversedominationingraphs1991}
and
\cite{lowenstein2010complementInthecomplementofadominatingsetPhDThesis}) of graphs (i.e. the minimum cardinality of a dominating set whose complement contains a minimum dominating set of the graph).
In particular, in Corollary \ref{inDMgraphsinversedominationnumberisequalnminusdominationnumber}, we conclude that for every graph $G$ with $n$ vertices (all non-isolated) satisfying either of the
conditions of clauses of Theorem \ref{thmDMiffMDScompMDSiffENLOAL} (recall that those conditions are proved that are equivalent except for the case $G \simeq C_4$), we have $\gamma(G)+\gamma^{-1}(G)=n$ where $\gamma^{-1}(G)$ is the inverse domination number of $G$.
It is worth noting that the class of graphs $G$
with the property $\gamma(G)+\gamma^{-1}(G)=n$ was considered in \cite{domkeDunbarMarkusTheinversedominationnumberofagraph}.
Corollary \ref{inDMgraphsinversedominationnumberisequalnminusdominationnumber}, clarifies the relationship between that class and graphs studied in this paper. 

\vspace{0.75mm}

Organization of the rest of the paper is as follows. After defining necessary concepts and notations and reviewing required notions in Subsection \ref{subsectiondefinitionsandnotations}, in Section \ref{sectionpreliminaryresults}, we will prove several technical results that we will need to establish our main results.
In Section \ref{MainResults}, we will state and prove main results of the paper which are Theorem \ref{thmDMiffMDScompMDSiffENLOAL} and a few corollaries of it and also Theorem \ref{Dominatinggraphsarepartialcube}.

\subsection{Some definitions and notations}\label{subsectiondefinitionsandnotations}

Let $G$ be a simple undirected graph.
For every $A \subseteq V(G)$, we denote the induced subgraph from $G$ on $A$ by $\langle A \rangle$.
By a {\em leaf} in $G$ we mean a vertex of degree $1$. A vertex which is not a leaf is called a {\em non-leaf} vertex.
For a vertex $v$ of $G$, by $N_G(v)$, we mean the set of vertices adjacent to $v$ and also we let $N_G[v]:=N_G(v) \cup \{v\}$ (when clear from the context, we instead use simpler notations $N(v)$ and $N[v]$ respectively).
By an {\em independent set} of $G$, we mean a set of vertices that there is no edge between any two of them.
By a {\em maximal independent set (MIS)}, we mean an independent set which is not the strict subset of any other independent set.
For a dominating set $D$ of $G$ and $v \in D$, we call a vertex $w \in V(G)$ a {\em $D$-private neighbor} (or simply a private neighbor) of $v$ if $w$ is dominated by $v$ but is not dominated by any vertex in $D-\{v\}$.
So a vertex can be $D$-private neighbor of itself.
It is straightforward to verify that every vertex in any given minimal dominating set $D$ must have a private neighbor.
By $Dom(G)$ we mean the family of all dominating sets of $G$.
Also by $mDom(G)$ we mean the family of all minimal dominating sets of $G$.

\vspace{1mm}

We define the following notions which are the central definitions of this paper.

\begin{dfn}\label{defDominatingMedianGraphs}
	We call a graph $G$ {\em dominating-median (or DM for short)} if its dominating graph $D_{|G|}(G)$ is a median graph.
\end{dfn}	

\begin{dfn}\label{defMDScoMDSGraphs}
	We call a graph $G$ a {\em MDScoMDS} graph if the complement of every minimal dominating set of $G$ is also a minimal dominating set.
\end{dfn}

Recall that the idea of studying complements of dominating sets (and also pairs of disjoint dominating sets) has been investigated in several papers in the literature.
In this direction, the following notion was introduced in \cite{KulliSigarkantiInversedominationingraphs1991} and worked out in several works such as
\cite{domkeDunbarMarkusTheinversedominationnumberofagraph,frendrupHenningOnaconjectureaboutinversedominationingraphs,johnsonPrierWalshHaynesOnaproblemofDomkeDunbarHaynesHedetniemiandMarkusconcerningtheinversedominationnumber}
and
\cite{lowenstein2010complementInthecomplementofadominatingsetPhDThesis}.

\begin{dfn}\label{dfninversedominationnumberofgraph}
	Let $G$ be a graph with no isolated vertex and $D$ a minimum dominating set of $G$.
	We call a set $B \subseteq V(G)$ an {\em inverse dominating set} with respect to the dominating set $D$ if $B \subseteq V(G)-D$ and $B$ is a dominating set of $G$.
	We call a subset of vertices of $G$ an inverse dominating set in $G$, if it is an inverse dominating set with respect to some minimum dominating set of $G$.
	The {\em inverse domination number} of $G$, denoted by $\gamma^{-1}(G)$, is the minimum cardinality of an inverse dominating set in $G$.
\end{dfn}

As mentioned earlier, it is known (by an observation of Ore \cite{OreTheoryofgraphs}) that in a graph with no isolated vertex, the complement of every minimal dominating set is a dominating set (and so the complement of every minimum dominating set is also a dominating set).
It follows that any graph with no isolated vertex contains an inverse dominating set and so the inverse domination number is defined for it.
Note that although the complement of a minimal dominating set in such graphs is a dominating set, it might not necessarily be a minimal dominating set. For instance, every vertex of $C_3$ is a minimal dominating set while its complement is not minimal dominating set.

For additional fundamental concepts from the domination theory in graphs see \cite{BookHaynesHedetniemiSlaterFundamentalsofDominationinGraphs}.
Also interested reader can refer to the books
\cite{BookHaynesHedetniemiHenningTopicsindominationingraphs} and
\cite{BookHaynesHedetniemiHenningStructuresofdominationingraphs}
to see many of the most significant recent developments
in domination theory in graphs and also several variants of the notion of domination.

We recall that every median graph is a partial cube but the converse does not hold.
Partial cubes and median graphs are connected graphs.
The {\em corona product} $G \odot H$ of two graphs $G$ and $H$ is the graph obtained from $G$ and $H$ by taking one copy of $G$ and $|V(G)|$ copies of $H$ and then joining by an edge every vertex from the $i$th-copy of $H$ with the $i$th-vertex of $G$ (for every $i \leqslant |V(G)|$).

\vspace{1mm}

We use the following notation frequently in the paper.

\begin{nota}
	For every three sets $A$, $B$ and $C$, we define
	$$*(A,B,C):=(A \cap B) \cup (A \cap C) \cup (B \cap C).$$
\end{nota}

\section{Some preliminary results and technical lemmas}\label{sectionpreliminaryresults}

The main results of this paper, in particular Theorem \ref{thmDMiffMDScompMDSiffENLOAL} and Theorem \ref{Dominatinggraphsarepartialcube}, will appear in the next section and will characterize, most importantly, DM graphs
as well as MDScoMDS graphs (see Definitions \ref{defDominatingMedianGraphs} and \ref{defMDScoMDSGraphs} above) and demonstrate relation between dominating graphs of graphs, complement of minimal dominating sets, median graphs and partial cubes.
In order to establish those results, we first need to prove several preliminary results and technical lemmas in this section that will be used in the proofs of our main theorems.

\begin{pro}\label{DMiff*everythreeDSisDS}
	Let $G$ be a graph and $H:=D_{|G|}(G)$ be its dominating graph. Then, the following hold.
	\begin{enumerate}
		\item{$G$ is a DM graph (remind that it means that $H$ is a median graph).}\label{domgraphismedian}

		\item{For every three dominating sets $D_1, D_2$ and $D_3$ of $G$, the set $D_4:=*(D_1,D_2,D_3)$ is a dominating set.}\label{*ofeverythreeDSisDS}
		
		\item{For every three minimal dominating sets $D_1, D_2$ and $D_3$ of $G$, the set $D_4:=*(D_1,D_2,D_3)$ is a dominating set.}\label{*ofeverythreeminDSisDS}
	\end{enumerate}
\end{pro}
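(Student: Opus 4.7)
The plan is to prove the three clauses are equivalent, reading ``the following hold'' as an iff chain in keeping with the mnemonic label of the proposition. The argument naturally splits into $(2)\Longleftrightarrow (3)$ and $(1)\Longleftrightarrow (2)$.

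The equivalence $(2)\Longleftrightarrow (3)$ is the easier half. The forward direction is immediate because every minimal dominating set is a dominating set. For the reverse, given three dominating sets $D_1,D_2,D_3$, I would replace each $D_i$ by a minimal dominating set $D_i'\subseteq D_i$ obtained by greedy removal of redundant vertices. A vertex lying in at least two of $D_1',D_2',D_3'$ lies in at least two of $D_1,D_2,D_3$, hence $*(D_1',D_2',D_3')\subseteq *(D_1,D_2,D_3)$. By hypothesis the smaller set is dominating, and since any superset of a dominating set is dominating, $*(D_1,D_2,D_3)$ is dominating as required.

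For $(1)\Longleftrightarrow (2)$, the preliminary observation I would establish is that $H$ embeds as an isometric subgraph of the Boolean hypercube $Q_n$ on ground set $V(G)$. Edges of $H$ and $Q_n$ agree on $V(H)$ by the very definition of the dominating graph, so only the metric equality $d_H = d_{Q_n}\big|_{V(H)}$ is at stake. Given dominating sets $A,B$, exploiting the fact that $Dom(G)$ is closed under supersets I would travel from $A$ to $A\cup B$ by adjoining the elements of $B\setminus A$ one at a time (each intermediate set contains $A$, hence is dominating) and then from $A\cup B$ to $B$ by deleting the elements of $A\setminus B$ one at a time (each intermediate set contains $B$). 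The resulting path in $H$ has length $|A\triangle B|$, matching the hypercube distance; this is essentially the content of Theorem \ref{Dominatinggraphsarepartialcube}.

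Once the isometric embedding is in place, $(1)\Longleftrightarrow (2)$ reduces to the standard fact that in $Q_n$ the unique median of any triple of subsets is their majority set $*(D_1,D_2,D_3)$. Since distances on $H$ and $Q_n$ agree, a vertex of $H$ is a median of $D_1,D_2,D_3$ in $H$ if and only if it equals $*(D_1,D_2,D_3)$ and belongs to $V(H)$. Therefore $H$ is a median graph precisely when $*(D_1,D_2,D_3)$ is a dominating set for every triple, which is clause $(2)$. The main obstacle I anticipate is this isometry step, but the up-closed structure of $Dom(G)$ makes it a short two-phase calculation, after which the equivalence with the median property follows from the hypercube median calculation.
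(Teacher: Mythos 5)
Your proposal is correct and its substance coincides with the paper's proof: the $(2)\Leftrightarrow(3)$ step (replace each $D_i$ by a minimal dominating subset, note $*(D_1',D_2',D_3')\subseteq *(D_1,D_2,D_3)$, and use that supersets of dominating sets are dominating) is exactly the paper's argument, and your two-phase path $A \to A\cup B \to B$ of length $|A\triangle B|$ is the same construction the paper uses. The only organizational difference is in $(1)\Leftrightarrow(2)$: you factor the argument through the isometric embedding of $H$ into the hypercube $Q_n$ and the standard fact that the unique median of a triple of hypercube vertices is their majority set $*(D_1,D_2,D_3)$, whereas the paper verifies existence directly (an explicit four-phase shortest path through $*(D_1,D_2,D_3)$ between each pair) and uniqueness via the sandwich $D_i\cap D_j\subseteq D_5\subseteq D_i\cup D_j$. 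Your packaging is slightly more modular, but it reduces to the same computations, so I would call it the same approach rather than a genuinely different route.
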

\begin{proof}
	$(\ref{domgraphismedian} \Rightarrow \ref{*ofeverythreeDSisDS})$
	Assume that $H$ is a median graph and $D_1, D_2$ and $D_3$ are dominating sets of $G$.
	So there exists a vertex $A$ in $H$ lying on some three shortest path's $P_1, P_2$ and $P_3$ from $D_1$ to $D_2$, from $D_1$ to $D_3$, and from $D_2$ to $D_3$ respectively.
	It is not hard to see that any shortest path from $D_1$ to $D_2$ in $H$ is of size $|D_1 \triangle D_2|$ (note that the construction of such a shortest path is also described in the proof of Theorem \ref{Dominatinggraphsarepartialcube}),
	and moreover, for every vertex $S$ on any shortest path from $D_1$ to $D_2$, we must have $S \subseteq D_1 \cup D_2$.
	Therefore, $A \subseteq D_1 \cup D_2$.
	Similarly, $A \subseteq D_1 \cup D_3$ and $A \subseteq D_2 \cup D_3$.
	It follows that
	$$A \subseteq (D_1 \cup D_2) \cap (D_1 \cup D_3) \cap (D_2 \cup D_3) = (D_1 \cap D_2) \cup (D_1 \cap D_3) \cup (D_2 \cap D_3) =*(D_1,D_2,D_3).$$
	Hence, since $A$ is a dominating set, $*(D_1,D_2,D_3)$ is a dominating set too.
	
	\vspace{1.5mm}	
	
	$(\ref{*ofeverythreeDSisDS} \Rightarrow \ref{domgraphismedian})$
	Assume that $D_1, D_2$ and $D_3$ are dominating sets of $G$.
	We claim that $D_4$ is the unique median of the three vertices $D_1, D_2$ and $D_3$ in graph $H$.
	We define a path $P_1$ in $H$ between $D_1$ and $D_2$ as follows:

	\begin{enumerate}
		\item{Start from $D_1$ and add to it one by one (according to an arbitrary ordering) every vertex of $(D_2 \cap D_3)-D_1$ until reaching to $S_1:=D_1 \cup (D_2 \cap D_3)$. Since every set in this process contains $D_1$, they are all dominating sets.}
		
		\item{Start from $S_1$ and remove from it one by one (according to an arbitrary ordering) every vertex of $D_1 - (D_2 \cup D_3)$ until reaching to $S_2:=D_4$ (note that $S_1-(D_1 - (D_2 \cup D_3))=D_4$ and recall that $D_4=*(D_1,D_2,D_3)$). Obviously, every set in this process contains $D_4$ and since $D_4$ is assumed to be a dominating set, they are all dominating sets.}
		
		\item{Start from $S_2$ and add to it one by one (according to an arbitrary ordering) every vertex of $D_2-D_4$ until reaching to $S_3:=D_2 \cup (D_1 \cap D_3)$ (note that $D_2 \cup (D_1 \cap D_3)=D_2 \cup D_4$). Since every set in this process contains $D_4$, they are all dominating sets.}
		
		\item{Start from $S_3$ and remove from it one by one (according to an arbitrary ordering) every vertex of $(D_1 \cap D_3)-D_2$ until reaching to $S_4:=D_2$. Since every set in this process contains $D_2$, they are all dominating sets.}
	\end{enumerate}
	
	Since every set obtained in above steps of moving from $D_1$ to $D_2$ on the path $P_1$ is a dominating set, $P_1$ is a path in $H$ connecting $D_1$ to $D_2$. Also it is not hard to verify that the length of $P_1$ is $|D_1 \triangle D_2|$ which means that it is a shortest path in $H$ between $D_1$ and $D_2$. Also it contains $D_4$.

	Similar to above, we can define some shortest path $P_2$ from $D_1$ to $D_3$ and some shortest path $P_3$ from $D_2$ to $D_3$ such that both $P_2$ and $P_3$ contain $D_4$.
	Now $D_4$ is a vertex lying on above shortest path's between pairs of the vertices $D_1$, $D_2$ and $D_3$.
	Therefore, $D_4$ is a median of the three vertices $D_1$, $D_2$ and $D_3$ in $H$.

	Now we show the uniqueness (in the definition of median graphs) of the vertex with this median property.
	Assume that $D_5$ is a vertex of $H$ lying on some shortest path's between each pair of the vertices $D_1$, $D_2$ and $D_3$.
	By what was shown, the length of any shortest path between $D_1$ and $D_2$ is equal to $|D_1 \triangle D_2|$. So, since $D_5$ lies on some shortest path between $D_1$ and $D_2$, it is not hard to see that we must have $D_1 \cap D_2 \subseteq D_5 \subseteq D_1 \cup D_2$.
	Similarly, $D_1 \cap D_3 \subseteq D_5 \subseteq D_1 \cup D_3$ and $D_2 \cap D_3 \subseteq D_5 \subseteq D_2 \cup D_3$.
	It follows that
	$$*(D_1,D_2,D_3)=(D_1 \cap D_2) \cup (D_1 \cap D_3) \cup (D_2 \cap D_3) \subseteq D_5$$ $$\subseteq (D_1 \cup D_2) \cap (D_1 \cup D_3) \cap (D_2 \cup D_3) = *(D_1,D_2,D_3).$$
	So $D_5=*(D_1,D_2,D_3)=D_4$ which implies the uniqueness of median.
	Therefore, $H$ is a median graph.
	
	\vspace{1.5mm}
	
	$(\ref{*ofeverythreeDSisDS} \Leftrightarrow \ref{*ofeverythreeminDSisDS})$ The direction $\ref{*ofeverythreeDSisDS} \Rightarrow \ref{*ofeverythreeminDSisDS}$ is obvious. For the converse, assume that $D_1, D_2$ and $D_3$ are some dominating sets. So there are minimal dominating sets $D'_1, D'_2$ and $D'_3$ such that $D'_1 \subseteq D_1$, $D'_2 \subseteq D_2$ and $D'_3 \subseteq D_3$.
	Now, by the assumption in $\ref{*ofeverythreeminDSisDS}$, $*(D'_1,D'_2,D'_3)$ is a dominating set. But it is easy to see that $*(D'_1,D'_2,D'_3) \subseteq *(D_1,D_2,D_3)$. Hence, $*(D_1,D_2,D_3)$ is a dominating set too. It follows the result.
	\hfill $\square$
\end{proof}

\begin{lem}\label{Conectedcomponents}
	Let $G$ be a graph with connected components $G_1,\dots G_r$. Then, the following hold.
	\begin{enumerate}
		\item{We have
			$$Dom(G)=\{D_1 \cup \dots \cup D_r: D_i \in Dom(G_i), \ for \ each \ i \leqslant r\},$$
			$$mDom(G)=\{D_1 \cup \dots \cup D_r: D_i \in mDom(G_i), \ for \ each \ i \leqslant r\}.$$}\label{DomandmDomofconnectedcomps}
		\item{$G$ is DM if and only if every connected component of $G$ is a DM graph.}\label{AgraphisDMiffeveryconnectedcomponentisDM}
		\item{$G$ is MDScoMDS if and only if every connected component of $G$ is a MDScoMDS graph.}\label{AgraphisMDScompMDSiffeveryconnectedcomponentisMDScompMDS}
	\end{enumerate}
\end{lem}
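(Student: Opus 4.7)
The plan is to first establish part (\ref{DomandmDomofconnectedcomps}) by exploiting that vertices in distinct components have no edges between them, so every domination relation is confined to a single component. Parts (\ref{AgraphisDMiffeveryconnectedcomponentisDM}) and (\ref{AgraphisMDScompMDSiffeveryconnectedcomponentisMDScompMDS}) will then follow by combining (\ref{DomandmDomofconnectedcomps}) with, respectively, the $*$-operation characterization from Proposition \ref{DMiff*everythreeDSisDS} and a direct componentwise decomposition of set-complements.

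For (\ref{DomandmDomofconnectedcomps}), a set $D\subseteq V(G)$ dominates a vertex $v\in V(G_i)$ if and only if $D\cap V(G_i)$ dominates $v$ in $G_i$, because $v$ has no neighbor outside $G_i$. Hence $D\in Dom(G)$ iff $D\cap V(G_i)\in Dom(G_i)$ for every $i$, giving the stated decomposition. For the MDS case, if each $D_i\in mDom(G_i)$ and $D=D_1\cup\dots\cup D_r$, then removing any $x\in D_i$ destroys domination of some private neighbor of $x$ inside $G_i$, hence $D-\{x\}\notin Dom(G)$; conversely, if some $D_i$ fails to be minimal, we can delete a vertex from it and still obtain a dominating set of $G$ by the first part.

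For part (\ref{AgraphisDMiffeveryconnectedcomponentisDM}), I would invoke the equivalence between the median property of $D_{|G|}(G)$ and the condition that $*(D^{(1)},D^{(2)},D^{(3)})\in Dom(G)$ for all dominating sets $D^{(1)},D^{(2)},D^{(3)}$ of $G$, provided by Proposition \ref{DMiff*everythreeDSisDS}. Writing $D^{(j)}_i:=D^{(j)}\cap V(G_i)$ and using that the $V(G_i)$ are pairwise disjoint, an immediate set-theoretic check yields
$$*(D^{(1)},D^{(2)},D^{(3)})\cap V(G_i)=*(D^{(1)}_i,D^{(2)}_i,D^{(3)}_i).$$
Combined with (\ref{DomandmDomofconnectedcomps}), the global $*$-set is dominating iff each componentwise $*$-set is dominating. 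Since every triple of dominating sets of a component $G_i$ extends (by fixing arbitrary dominating sets on the other components) to a triple of dominating sets of $G$ with the same $i$-th restriction, the $*$-condition holds for $G$ iff it holds for every $G_i$, which by Proposition \ref{DMiff*everythreeDSisDS} is the desired equivalence.

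For part (\ref{AgraphisMDScompMDSiffeveryconnectedcomponentisMDScompMDS}), given a MDS $D$ of $G$ and its decomposition $D=D_1\cup\dots\cup D_r$ from (\ref{DomandmDomofconnectedcomps}), the complement splits as $V(G)-D=\bigcup_i(V(G_i)-D_i)$; applying (\ref{DomandmDomofconnectedcomps}) once more shows $V(G)-D\in mDom(G)$ iff each $V(G_i)-D_i\in mDom(G_i)$. Since the MDSs of $G$ project onto arbitrary MDSs of each $G_i$ (any MDS of a single component can be completed to a global MDS by choosing any MDS on the remaining components), the MDScoMDS property transfers between $G$ and its components in both directions. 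The only step that requires a little care, and is the main bookkeeping obstacle, is the verification of the distributive identity for $*$ and the extension argument producing componentwise triples/MDSs from global ones; both reduce to direct checks from the definitions.
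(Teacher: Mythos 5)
Your proposal is correct and follows essentially the same route as the paper: part (\ref{DomandmDomofconnectedcomps}) by confining domination to components, part (\ref{AgraphisDMiffeveryconnectedcomponentisDM}) via the $*$-characterization of Proposition \ref{DMiff*everythreeDSisDS} together with the identity $*(A,B,C)\cap V(G_i)=*(A_i,B_i,C_i)$ and the extension of component triples to global ones, and part (\ref{AgraphisMDScompMDSiffeveryconnectedcomponentisMDScompMDS}) by componentwise splitting of complements. You merely spell out some details (the private-neighbor argument for minimality, the arbitrary choice of dominating sets on the other components where the paper uses $V(G_i)$) that the paper leaves as ``easy.''
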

\begin{proof}
	\ref{DomandmDomofconnectedcomps}) is easy.
	
	\ref{AgraphisDMiffeveryconnectedcomponentisDM})
	Assume that each component $G_i$ is a DM graph. Let $A, B$ and $C$ be some arbitrary dominating sets of $G$. By Proposition $\ref{DMiff*everythreeDSisDS}$, 
	it is enough to show that $*(A,B,C)$ is a dominating set of $G$. It is not hard to see that $*(A,B,C)=\bigcup_{j \leqslant r} *(A_i,B_i,C_i)$ where $A_i:=A \cap V(G_i)$, $B_i:=B \cap V(G_i)$ and $C_i:=C \cap V(G_i)$ for each $i \leqslant r$. Also by Part \ref{DomandmDomofconnectedcomps}, $A_i$, $B_i$ and $C_i$ are dominating sets of $G_i$ for each $i$. Again, by Proposition $\ref{DMiff*everythreeDSisDS}$, 
	$*(A_i,B_i,C_i)$ is a dominating set of $G_i$ for each $i$. Hence, by Part \ref{DomandmDomofconnectedcomps}, $*(A,B,C)$ is a dominating set of $G$. It follows that $G$ is a DM graph.
	
	Now we prove the converse. Assume that $G$ is DM. By using Proposition $\ref{DMiff*everythreeDSisDS}$, it would be enough to show that for a fixed arbitrary $i_0 \leqslant r$, and every three dominating sets $A_{i_0}$, $B_{i_0}$ and $C_{i_0}$ of $G_{i_0}$, the set $*(A_{i_0}, B_{i_0}, C_{i_0})$ is a dominating set of $G_{i_0}$.
	For every $i \leqslant r$ with $i \not = i_0$ define $A_i$, $B_i$ and $C_i$ to be all equal $V(G_i)$.
	Also let $A:= \bigcup_{i \leqslant r} A_i$, $B:= \bigcup_{i \leqslant r} B_i$ and $C:= \bigcup_{i \leqslant r} C_i$.
	Obviously, by Part \ref{DomandmDomofconnectedcomps}, $A, B$ and $C$ are dominating sets of $G$.
	Since $G$ is DM, by Proposition $\ref{DMiff*everythreeDSisDS}$, the set $*(A,B,C)$ is a dominating set of $G$.
	It is also easily seen that $*(A,B,C) \cap V(G_{i_0})=*(A_{i_0},B_{i_0},C_{i_0})$.
	Consequently, by Part \ref{DomandmDomofconnectedcomps}, $*(A_{i_0},B_{i_0},C_{i_0})$ is a dominating set of $G_{i_0}$. It follows the result.	
	
	\ref{AgraphisMDScompMDSiffeveryconnectedcomponentisMDScompMDS})
	By using Part \ref{DomandmDomofconnectedcomps}, it is not difficult to verify that the complement of every minimal dominating set in $G$ is a minimal dominating set if and only if the same happens in each connected component of $G$. It follows that $G$ is MDScoMDS if and only if every connected component of it is MDScoMDS.
	\hfill $\square$
\end{proof}

\begin{lem}\label{addingcoleafpreservesDM}
	Let $G$ be a graph and $v$ a leaf vertex of it which is adjacent to a vertex $w$.
	Let $H$ be the graph obtained from $G$ by adding a number of new vertices to $G$ and connecting them only to $w$ (so, they would be leaf vertices in $H$).
	Then, $H$ is DM if and only if $G$ is DM.
\end{lem}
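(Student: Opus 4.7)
The plan is to invoke Proposition \ref{DMiff*everythreeDSisDS} to rephrase each DM property as closure of the family of dominating sets under the ternary operation $*$, and then transport this closure property between $G$ and $H$. Write $U := V(H) \setminus V(G)$ for the set of newly added leaves; every $u \in U$ has $N_H(u) = \{w\}$. Two structural facts will be used repeatedly: (i) for any $x \in V(G) \setminus \{w\}$ we have $N_H(x) = N_G(x)$, so the domination status of $x$ with respect to a subset $S \subseteq V(H)$ depends only on $S \cap V(G)$; and (ii) because $v$ is a leaf of $G$ (and of $H$) with unique neighbour $w$, every dominating set of either graph must contain at least one of $v$ or $w$.

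For the forward implication ($G$ DM implies $H$ DM), given three dominating sets $E_1, E_2, E_3$ of $H$, I would set $D_i := E_i \cap V(G)$ and first check that each $D_i$ is dominating in $G$. By (i), the only potentially problematic vertex is $w$, whose domination is secured by (ii): $D_i$ contains $v$ or $w$, so $w$ is dominated in $G$. Since $G$ is DM, Proposition \ref{DMiff*everythreeDSisDS} gives that $*(D_1,D_2,D_3)$ is dominating in $G$. I would then verify that $*(E_1,E_2,E_3)$ dominates $H$, in three parts: for $x \in V(G) \setminus \{w\}$, domination follows from (i) together with $*(D_1,D_2,D_3) = *(E_1,E_2,E_3) \cap V(G)$; for $w$, it follows from $*(D_1,D_2,D_3)$ dominating $w$ in $G$ combined with $N_G(w) \subseteq N_H(w)$; and for $u \in U$, a short case split on how many $E_i$ contain $w$ closes the argument (if $w \in E_i$ for at least two $i$ then $w \in *(E_1,E_2,E_3)$, and otherwise $u$ must belong to at least two $E_i$, hence to $*(E_1,E_2,E_3)$).

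For the reverse implication I would take three dominating sets $D_1, D_2, D_3$ of $G$ and lift them to $E_i := D_i \cup U$, which are immediately dominating in $H$. A direct distributivity computation yields
\[
*(E_1,E_2,E_3) \;=\; *(D_1,D_2,D_3) \cup U,
\]
and by $H$ being DM together with Proposition \ref{DMiff*everythreeDSisDS}, this set dominates $H$. To deduce that $*(D_1,D_2,D_3)$ dominates $G$, fact (i) handles every vertex of $V(G) \setminus \{w\}$, since no $u \in U$ can contribute to their domination. For $w$ itself I would invoke the pigeonhole twin of (ii): each $D_i$ contains $v$ or $w$, so at least two of the $D_i$ make the same choice, and consequently one of $v, w$ lies in $*(D_1,D_2,D_3)$, which in either case dominates $w$.

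The step I expect to require the most care, and which is the conceptual heart of the lemma, is the treatment of $w$ in both directions: $w$ is the unique vertex whose neighbourhood genuinely changes on passing from $G$ to $H$, and its domination entangles the behaviour of $v$ and of the new leaves in $U$. The hypothesis that $v$ is already a leaf adjacent to $w$ is precisely what forces $w$ to be dominated by the relevant $*$-set via the simple pigeonhole argument above, which is the reason the $*$-closure property transports cleanly in both directions of the equivalence.
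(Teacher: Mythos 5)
Your proof is correct and follows essentially the same route as the paper's: both directions reduce to $*$-closure via Proposition~\ref{DMiff*everythreeDSisDS}, project dominating sets of $H$ down to $G$ (resp.\ lift them up by adjoining the new leaves), and secure the domination of $w$ and of the new leaves by the pigeonhole observation that each dominating set must contain one of $v$, $w$ (resp.\ one of a new leaf and $w$). The only cosmetic difference is that you treat all new leaves $U$ at once, whereas the paper adds them one at a time.
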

\begin{proof}
	For simplicity, we assume that only one new leaf $z$ is added to $G$ (obviously the general case would be obtained by repeating this).
	Assume that $G$ is DM and $A, B$ and $C$ are three arbitrary dominating sets of $H$.
	In order to show that $H$ is DM, it is enough (By Proposition \ref{DMiff*everythreeDSisDS}) to show that $*(A,B,C)$ is a dominating set of $H$.
	Since $v$ is leaf of $H$, $A$ must contain at least one of $v$ or $w$.
	So, it is not hard to verify that $A':=A-\{z\}$ is a dominating set of $G$.
	Similarly, $B':=B-\{z\}$ and $C':=C-\{z\}$ are dominating sets of $G$.
	Since $G$ is DM, by Proposition \ref{DMiff*everythreeDSisDS}, $*(A',B',C')$ is a dominating set of $G$.
	Thus, since $*(A',B',C') \subseteq *(A,B,C)$, the set $*(A,B,C)$ (which is a subset of vertices of $H$) dominates every vertex in $V(G)$.
	If $w \in *(A,B,C)$, then $*(A,B,C)$ would be a dominating set of $H$. Otherwise, if $w \not \in *(A,B,C)$, then at least two of $A, B$ and $C$, say $A$ and $B$, do not contain $w$.
	So, since $A$ and $B$ are dominating sets of $H$, they must contain $z$ and $v$. It follows that $z \in *(A,B,C)$. Therefore, $*(A,B,C)$ is a dominating set of $H$.

	For the converse, assume that $H$ is DM and $X, Y$ and $Z$ are three dominating sets of $G$. Again, it would be enough
	(by Proposition \ref{DMiff*everythreeDSisDS}) to show that $*(X,Y,Z)$ is a dominating set of $G$.
	Let $A:=X \cup \{z\}$, $B:=Y \cup \{z\}$ and $C:=Z \cup \{z\}$.
	Obviously, $A, B$ and $C$ are dominating sets of $H$. So, by Proposition \ref{DMiff*everythreeDSisDS}, $*(A,B,C)$ is a dominating set of $H$.
	It is easily seen that $*(A,B,C)-\{z\} = *(X,Y,Z)$.
	Since $z$ is a leaf and is adjacent to $w$, it is sufficient to show that $*(X,Y,Z)$ dominates $w$.
	If $w$ belongs to at least two of $X,Y$ and $Z$, then it belongs to $*(X,Y,Z)$. 
	Otherwise, $w$ does not belong to at least two of $X, Y$ and $Z$, say $X$ and $Y$. Hence, since $X$ and $Y$ dominate $G$, then we must have $v \in X$ and $v \in Y$. Thus, $v \in *(X,Y,Z)$. It follows that $*(X,Y,Z)$ dominates $w$ and we are done.
	\hfill $\square$
\end{proof}

\begin{lem}\label{DMpreservesafteraddingleafstoavertexwhoseneighbshavesomeleafneighb}
	Let $G$ be a graph and $w$ a vertex of it. Assume that every
	vertex in $N_G(w)$ is adjacent to some vertex
	(different from $w$)
	which is a leaf. Let $H$ be the graph obtained from $G$ by adding some new vertices $v_1,\ldots,v_t$ and the edges $wv_1,\ldots, wv_t$ to it. Then, if $G$ is a DM graph, then $H$ is also a DM graph.
\end{lem}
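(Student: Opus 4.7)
The plan is to verify clause (\ref{*ofeverythreeDSisDS}) of Proposition \ref{DMiff*everythreeDSisDS} for $H$: given any three dominating sets $A,B,C$ of $H$, I will show that $*(A,B,C)$ dominates every vertex of $H$. Set $A_0:=A\cap V(G)$ and define $B_0,C_0$ analogously, so that $*(A_0,B_0,C_0)=*(A,B,C)\cap V(G)$. Since the only new edges of $H$ are incident to $w$, for every $u\in V(G)\setminus\{w\}$ we have $N_H(u)=N_G(u)\subseteq V(G)$, which forces $A_0$ (and $B_0$, $C_0$) to dominate $V(G)\setminus\{w\}$ in $G$. Hence $A_0\cup\{w\},\ B_0\cup\{w\},\ C_0\cup\{w\}$ are dominating sets of $G$, and since $G$ is DM, Proposition \ref{DMiff*everythreeDSisDS} yields that
$$*\bigl(A_0\cup\{w\},\ B_0\cup\{w\},\ C_0\cup\{w\}\bigr)\ =\ *(A_0,B_0,C_0)\cup\{w\}$$
is a dominating set of $G$.

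Next I would dispose of the new vertices and of $w$ by a short leaf-style argument in the spirit of Lemma \ref{addingcoleafpreservesDM}. Since each $v_i$ is a leaf of $H$ adjacent only to $w$, if $v_i\notin *(A,B,C)$ then $v_i$ lies in at most one of $A,B,C$ and the other two must dominate $v_i$ solely through $w$, placing $w$ in two of them and hence $w\in *(A,B,C)$. Thus every $v_i$ is dominated by $*(A,B,C)$. Similarly, if $w\notin *(A,B,C)$ then $w$ is missing from two of $A,B,C$, forcing every $v_i$ (I may assume $t\geq 1$, otherwise $H=G$ and the statement is trivial) into those two sets, so $v_1\in *(A,B,C)$ dominates $w$ in $H$.

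The crux, and the only place where the hypothesis of the lemma is actually invoked, is showing that every $u\in V(G)\setminus\{w\}$ is dominated by $*(A,B,C)$ in $H$. From the first paragraph, $u$ is dominated in $G$ by $*(A_0,B_0,C_0)\cup\{w\}$. If some element of $N_G[u]\cap *(A_0,B_0,C_0)$ provides the witness, then that element lies in $*(A,B,C)$ (as $*(A_0,B_0,C_0)\subseteq *(A,B,C)$) and is still adjacent to $u$ in $H$ (as $N_H[u]=N_G[u]$), so $u$ is dominated. The remaining, and the only subtle, case is when the only witness is $w$ itself, that is, $u\in N_G(w)$ and $N_G[u]\cap *(A_0,B_0,C_0)=\emptyset$. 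This is the main obstacle I expect, and the leaf hypothesis resolves it cleanly: choose a leaf $\ell_u\in N_G(u)\setminus\{w\}$ guaranteed by the assumption; then $\ell_u\in N_G[u]$ forces $\ell_u\notin *(A,B,C)$, so $\ell_u$ is missing from at least two of $A,B,C$, and each of these two dominating sets must dominate the leaf $\ell_u$ through its unique neighbor $u$, placing $u$ in both and thus $u\in *(A,B,C)$, as required. This completes the verification that $*(A,B,C)$ dominates $V(H)$, and hence that $H$ is a DM graph.
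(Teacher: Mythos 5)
Your proof is correct and follows essentially the same strategy as the paper's: replace the new pendant vertices by $w$ in each of $A,B,C$ (your $A_0\cup\{w\}$ is exactly the paper's $A'=(A-\{v\})\cup\{w\}$), apply the DM property of $G$ to conclude that $*(A,B,C)\cup\{w\}$ dominates $V(G)$, and then use the hypothesis that each $u\in N_G(w)$ has a pendant neighbour $\ell_u\neq w$ to force $u$ or $\ell_u$ into two of the three sets. The only cosmetic difference is that you treat all $t$ new leaves simultaneously, whereas the paper first reduces to $t=1$ via Lemma~\ref{addingcoleafpreservesDM}; both routes are sound.
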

\begin{proof}
	One easily verifies that by Lemma \ref{addingcoleafpreservesDM}, it would be sufficient to prove for the case $t=1$, which means that just a single vertex $v$ and an edge $wv$ is added to $G$.
	Let $A, B, C$ be three arbitrary dominating sets of $H$. By Proposition \ref{DMiff*everythreeDSisDS}, it would be enough to show that $*(A,B,C)$ is also a dominating set of $H$.
	Define $A':=(A-\{v\}) \cup \{w\}$, $B':=(B-\{v\}) \cup \{w\}$ and $C':=(C-\{v\}) \cup \{w\}$. Obviously, $A',B'$ and $C'$ are subsets of $V(G)$.
	
	\vspace{1mm}
	
	{\bf \em Claim 1:} $A'$, $B'$ and $C'$ are all dominating sets of $G$.
	
	{\em Proof of Claim 1:}	
	Assume for contradiction that for example $A'$ does not dominate $G$. So there is $x \in V(G)$ which does not belong to and is not adjacent to any vertex in $A'$. Obviously, $x \not= w$ and so $x$ is not adjacent to $v$ in $H$. Thus, $x$ is not
	dominated by
	$A' \cup \{v\}$ in $H$ and therefore, is not
	dominated by $A$ since $A \subseteq A' \cup \{v\}$.
	It follows that $A$ does not dominate $H$ which is a contradiction.
	So, $A'$ is a dominating set of $G$.
	Similarly, $B'$ and $C'$ are also dominating sets of $G$.
	\hfill {\em Claim 1} $\square$
	
	\vspace{1mm}
	
	{\bf \em Claim 2:} We have $*(A',B',C') \subseteq *(A,B,C) \cup \{w\}$. Moreover, at least one of $v$ and $w$ belongs to $*(A,B,C)$.
	
	{\em Proof of Claim 2:} We have
	$$*(A,B,C) \cup \{w\} = ((A \cup \{w\}) \cap (B \cup \{w\})) \cup (A \cup \{w\}) \cap (C \cup \{w\})) \cup (B \cup \{w\}) \cap (C \cup \{w\}))$$
	$$\supseteq (A' \cap B') \cup (A' \cap C') \cup (B' \cap C')=*(A',B',C').$$
	Since $A, B$ and $C$ are dominating sets of $H$, each one must contain at least one of $v$ or $w$ since $v$ is a leaf in $H$. Therefore, at least one of $v$ or $w$ belong to at least two of $A, B$ and $C$. It follows that $*(A,B,C)$ contains at least one of $v$ or $w$.
	\hfill {\em Claim 2} $\square$
	
	\vspace{1.5mm}
	
	Since as was shown in Claim 1, $A',B'$ and $C'$ are dominating sets of $G$, and $G$ is assumed to be a DM graph, then by Proposition \ref{DMiff*everythreeDSisDS}, $*(A',B',C')$ is a dominating set of $G$.
	Now since by Claim 2 we have $*(A',B',C') \subseteq *(A,B,C) \cup \{w\}$, then the set $*(A,B,C) \cup \{w\}$ dominates every vertex in $V(G)$.
	Hence, $*(A,B,C)$ dominates every vertex in $V(G)-N_G[w]$.
	Also as was shown in Claim 2, $*(A,B,C)$ contains at least one of $v$ or $w$.
	It follows that every vertex in $V(H)-N_G(w)$ is dominated by $*(A,B,C)$.
	On the other hand, by the assumption, every $z \in N_G(w)$ is adjacent to some vertex $z'$ (different from $w$) such that $z'$ is a leaf vertex in $G$ (so $z'$ would be a leaf of $H$ too). Since each of $A, B$ and $C$ is a dominating set of $H$ and $z'$ is a leaf, each of $A, B$ and $C$ must contains at least one of $z$ or $z'$. It follows that at at least one of $z$ or $z'$ belongs to at least two of $A, B$ and $C$. It implies that $*(A,B,C)$ contains at least one of $z$ or $z'$.
	Therefore, every $z \in N_G(w)$ is either inside or adjacent to some vertex of $*(A,B,C)$ in $H$.
	Putting all above together, $*(A,B,C)$ is a dominating set of $H$.
	\hfill $\square$
\end{proof}

\begin{rem}\label{everygraphwithatleastoneedgehasatleasttwoMDS}
	Every graph with at least one edge has at least two distinct minimal dominating sets.
\end{rem}
\begin{proof}
	Assume that $G$ has an edge $uv$ where $u, v \in V(G)$. Let $M$ be a maximal independent set of vertices containing $u$. Then, it is easy to see that $M$ is a minimal dominating set of $G$. Also since $u \in M$ and $u$ is adjacent to $v$, we have $v \not \in M$.
	Similarly, we can find another minimal dominating set containing $v$ and not containing $u$. It follows the result.
	\hfill $\square$
\end{proof}

\section{Main results}\label{MainResults}	

\subsection{Characterization of dominating-median (DM) graphs and MDScoMDS graphs}
The following theorem, which is the main result of this paper, makes novel connections between 
dominating graphs of graphs, complement of dominating sets 
and median graphs. 
More precisely, this result characterizes dominating-median (DM) graphs (see Definition \ref{defDominatingMedianGraphs}) and moreover, characterizes graphs with the property that the complement of every minimal dominating set is a minimal dominating set (MDScoMDS graphs).
In particular, it shows the equivalency between these two conditions and also by a simple third condition, gives a complete characterization for both of them.
Furthermore, it provides some new infinite sources of examples of median graphs since, as we will see, the dominating graph of every graph with the property that every vertex is either leaf or adjacent to a leaf gives rise to a median graph.
As mentioned in the introduction of the paper, finding new sources of examples of median graphs has always been a subject of great interest and was considered extensively in the literature.

Recall from Section \ref{sectionintroduction} that it is known
that the complement of every minimal dominating set in a graph with no isolated vertex is always a dominating set but not necessarily minimal dominating.

\begin{thm}\label{thmDMiffMDScompMDSiffENLOAL}(Main result)
	Let $G$ be a graph with no isolated vertex and $G \not \simeq C_4$. Then, the following are equivalent.
	\begin{enumerate}
		\item{$G$ is a DM graph (recall that it means that the dominating graph $D_{|G|}(G)$ of $G$ is a median graph).}\label{graphisDM}
		\item{$G$ is a MDScoMDS graph (recall that it means that the complement of every minimal dominating set of $G$ is a minimal dominating set).}\label{MDScompMDS} 
		\item{Every vertex of $G$ is either a leaf or adjacent to a leaf.}\label{ENLOAL}
	\end{enumerate}
\end{thm}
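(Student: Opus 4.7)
My plan is to prove the three-way equivalence via the implications $(3) \Rightarrow (1)$, $(3) \Rightarrow (2)$, $(1) \Rightarrow (3)$, and $(2) \Rightarrow (3)$, where only the last requires the hypothesis $G \not\simeq C_4$. By Lemma \ref{Conectedcomponents} I can reduce to the case of a connected $G$, since all three conditions behave componentwise. The implications from $(3)$ are the easier ones; the two converses I would prove by contrapositive, starting from a ``bad'' vertex $v$ — a non-leaf not adjacent to any leaf, so $\deg(v) \geq 2$ and every $u \in N(v)$ has $\deg(u) \geq 2$.

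For $(3) \Rightarrow (1)$, I apply the criterion of Proposition \ref{DMiff*everythreeDSisDS}: given dominating sets $D_1,D_2,D_3$ and a vertex $v$, I produce an element of $*(D_1,D_2,D_3)$ that dominates $v$. Condition $(3)$ furnishes a leaf-support pair $\{\ell,u\}$ with $v \in \{\ell,u\}$: if $v$ is a leaf, set $\ell = v$ and $u$ its unique neighbor; otherwise set $u = v$ and $\ell$ any leaf neighbor of $v$. Since $N_G[\ell]=\{\ell,u\}$ and each $D_i$ dominates the leaf $\ell$, each $D_i$ meets the two-element set $\{\ell,u\}$, and pigeonhole forces at least two of the three sets to share a common element in $\{\ell,u\}$, which then belongs to $*(D_1,D_2,D_3)$ and dominates $v$.

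For $(3) \Rightarrow (2)$, I first establish a structural dichotomy: in any MDS $D$, for every non-leaf $u$ with leaf neighbors $\ell_1,\dots,\ell_k$, either $u \in D$ and no $\ell_i \in D$ (else such an $\ell_i$ would have no private neighbor, contradicting minimality), or $u \notin D$ and every $\ell_i \in D$ (each $\ell_i$ has only $u$ as a neighbor and must be dominated). Under $(3)$ this dichotomy controls the whole vertex set, and $V(G) \setminus D$ inherits the swapped choice at every support vertex. Minimality of $V(G)\setminus D$ is then checked pointwise: a leaf $v \in V(G) \setminus D$ has its support in $D$, so $v$ is its own private neighbor in the complement; a non-leaf $v \in V(G) \setminus D$ has every leaf neighbor in $D$, and any such leaf is a private neighbor of $v$ in the complement.

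For the converse directions I assume $v$ is bad, so $|N_G[v]| \geq 3$. For $(1) \Rightarrow (3)$ I would produce three pairwise disjoint nonempty subsets $S_1,S_2,S_3 \subseteq N_G[v]$ so that $D_i := S_i \cup (V(G) \setminus N_G[v])$ is dominating; taking $S_1 = \{v\}$ (which dominates $N_G[v]$ since $v$ is universal in $G[N_G[v]]$), the remaining task is to find disjoint $S_2, S_3 \subseteq N(v)$ each dominating $A := \{u \in N(v) : N_G(u) \subseteq N_G[v]\}$ in $G[N_G[v]]$; this is achieved by applying Ore's observation inside the induced subgraph $G[N(v)]$ (restricted to its non-isolated vertices, which contain $A$ by the bad-vertex degree hypothesis), and the resulting $D_i$'s satisfy $*(D_1,D_2,D_3) = V(G)\setminus N_G[v]$, which does not dominate $v$, so $G$ is not DM by Proposition \ref{DMiff*everythreeDSisDS}. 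For $(2) \Rightarrow (3)$ the goal is to exhibit an MDS $D$ whose complement has a redundant vertex; a natural candidate is a maximal independent set $I$ of $G$ containing $v$, whose complement contains $N(v)$ and often harbors a redundancy in $V(G) \setminus N_G[v]$, or alternatively an MDS built around $N(v)$ itself so that some vertex at distance two from $v$ becomes redundant in the complement. The main obstacle I expect is precisely this construction: $C_4$ is itself MDScoMDS yet fails $(3)$, and a case analysis on $\deg(v)$, on the adjacencies inside $G[N(v)]$, and on the structure of $V(G) \setminus N_G[v]$ must show that every \emph{other} connected bad graph does admit such an MDS, thereby justifying the $C_4$ exclusion in the statement.
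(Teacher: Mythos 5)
Your implications $(3)\Rightarrow(1)$, $(3)\Rightarrow(2)$ and $(1)\Rightarrow(3)$ are essentially correct, and the first of these is genuinely nicer than the paper's argument: the paper proves $(3)\Rightarrow(1)$ by an induction on trees (via two auxiliary lemmas about adding leaves) followed by a spanning-tree reduction, whereas your pigeonhole observation --- every dominating set must meet the two-element set $N[\ell]=\{\ell,u\}$ of a leaf--support pair, so two of $D_1,D_2,D_3$ share an element of it, which lands in $*(D_1,D_2,D_3)$ and dominates $v$ --- settles the matter in a few lines. Your $(1)\Rightarrow(3)$ also cleanly unifies the paper's three-case analysis (the paper splits on whether $H_2=\{u\in N(v):N(u)\subseteq N[v]\}$ is empty and on how it attaches to the rest of $N(v)$) by invoking Ore's observation inside $G[N(v)]$; just note the degenerate case where $N(v)$ is independent, so $A=\emptyset$ and the restricted subgraph is empty --- there you must fall back to two arbitrary disjoint singletons of $N(v)$ to keep $S_2,S_3$ nonempty, since otherwise $D_2=D_3=L$ fails to dominate $v$.

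The genuine gap is $(2)\Rightarrow(3)$, which you do not prove: you name two candidate constructions and then state that ``a case analysis \dots must show'' the required MDS exists, explicitly flagging this as the main obstacle. Since your other three implications only yield $(1)\Leftrightarrow(3)$ and $(3)\Rightarrow(2)$, the three-way equivalence is incomplete without it, and this is in fact the hardest and most delicate direction --- it is exactly where the $C_4$ exclusion enters. The paper's argument is worth knowing: call $v$ a $*$-vertex if some maximal independent set $M$ omits both $v$ and a neighbor of $v$. If $G$ is MDScoMDS, then $M^c$ is a minimal dominating set, so the non-isolated vertex $v$ of $\langle M^c\rangle$ has a private neighbor $s\in M$, and $s$ is forced to be a leaf adjacent to $v$; thus every non-leaf $*$-vertex is adjacent to a leaf. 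If instead a bad vertex $v$ (non-leaf, no leaf neighbor) is \emph{not} a $*$-vertex, one shows every vertex is adjacent to all of $N(v)$ or to none of it, and iterating this for a neighbor $w_0$ forces $G$ to be complete bipartite with parts $N(v)$ and $N(w_0)$ of size at least $2$; minimality of the complement of a two-element dominating set $\{a,b\}$ then forces both parts to have size exactly $2$, i.e.\ $G\simeq C_4$. Your maximal-independent-set candidate is in fact the right starting point, but the argument that a failure of it pins $G$ down to $C_4$ is the substantive content you are missing.
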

\begin{proof}
	$(\ref{graphisDM} \Rightarrow \ref{ENLOAL})$
	Assume for contradiction that there is a vertex $v$ of degree at least two which is not adjacent to any leaf.
	By using Proposition \ref{DMiff*everythreeDSisDS}, it would be enough to show that there are dominating sets $A, B$ and $C$ of $G$ such that $*(A,B,C)$ is not a dominating set.
	Let $H:=N(v)$ and recall that, for simplicity, we use notations $N(v)$ and $N[v]$ instead of $N_G(v)$ and $N_G[v]$ respectively.
	Note that $|H| \geqslant 2$ since $d(v) \geqslant 2$. Also every vertex in $H$ has degree at least two.
	Let $L:=V(G) -N[v]$.
	Let $H_1$ be the set of vertices in $H$ which have some adjacent vertex in $L$.
	Also let $H_2:=H -H_1$.
	We consider the following three cases and in each case find our three suitable dominating sets $A, B$ and $C$ mentioned above.
	
	\vspace{0.75mm}
	
	{\bf Case I:}: In this case we assume that $H_2=\emptyset$.
	So every vertex in $H$ is adjacent to some vertex in $L$.
	Let $w_1$ and $w_2$ be two distinct vertices in $H$.
	Define $A:=L \cup \{v\}$, $B:=L \cup \{w_1\}$ and $C:=L \cup \{w_2\}$.
	It is easy to see that $A, B$ and $C$ are all dominating sets of $G$ and also $*(A,B,C)=L$.
	Thus, $v$ is neither inside nor adjacent to any vertex in $*(A,B,C)$ which follows that $*(A,B,C)$ is not a dominating set of $G$.
	This finishes Case I.
	
	\vspace{0.75mm}
	
	{\bf Case II:}
	In this case we assume that $H_2 \not = \emptyset$ and also every vertex in $H_2$ is adjacent to a vertex in $H_1$ (which implies that $H_1 \not = \emptyset$ and so $L \not = \emptyset$).
	Now define
	$A:=L\cup \{v\}$, $B:=L \cup H_1$ and $C:=L \cup H_2$.
	It is easily seen that $A$, $B$ and $C$ are dominating sets of $G$.
	On the other hand, $*(A,B,C)=L$ which is not a dominating set of $G$. This finishes Case II.
	
	\vspace{0.75mm}
	
	{\bf Case III:}
	In this case we assume that $H_2 \not = \emptyset$ and also there is some vertex $v \in H_2$ which is not adjacent to any vertex in $H_1$ (this case includes the situation that $H_1=\emptyset$).
	So, every such vertex $v$ must be adjacent to some other vertex from $H_2$ since every vertex in $H$ has degree at least two.
	Thus, there is some edge $e$ in $\langle H_2 \rangle$, the induced graph on $H_2$.
	Therefore, by Remark $\ref{everygraphwithatleastoneedgehasatleasttwoMDS}$, the subgraph $\langle H_2 \rangle$ has at least two distinct minimal dominating sets $M_1$ and $M_2$.
	Also, notice that if $H_1 \not = \emptyset$, then $L \not = \emptyset$ and $L$ dominates every vertex in $H_1$.
	Now define $A:=L \cup \{v\}$, $B:=L\cup M_1$ and $C:=L \cup M_2$.
	It is straightforward to verify that $A, B$ and $C$ are dominating sets for $G$ and also
	$*(A,B,C)=L \cup (M_1 \cap M_2)$.
	On the other hand, since $M_1$ and $M_2$ are distinct minimal dominating sets of $\langle H_2 \rangle$, the set $M_1 \cap M_2$ is not a dominating set of the subgraph $\langle H_2 \rangle$. Recall that no vertex of $\langle H_2 \rangle$ is adjacent to any vertex of $L$. Therefore, $*(A,B,C)$ does not dominate all vertices of $\langle H_2 \rangle$. It follows that $*(A,B,C)$ is not a dominating set of $G$. This finishes Case III.

	\vspace{2mm}
	
	$(\ref{ENLOAL} \Rightarrow \ref{graphisDM})$
	By Lemma \ref{Conectedcomponents}(\ref{AgraphisDMiffeveryconnectedcomponentisDM}), it would be enough to prove the statement for the case that $G$ is connected. We prove the statement in the following steps.
	
	\vspace{1mm}
	
	{\bf \em Step I:}
	The statement holds for the case that $G$ is a tree.
	
	{\em Proof of Step I:}
	Since in this step we consider trees, for notational convenience, we use the notation $T$ instead of $G$ for our graphs in this step.
	We prove by induction on the number of the vertices of the tree.
	For the base case of tree with two vertices $P_2$, statement easily holds. Assume that the statement holds for every tree with at most $n$ vertices ($n \geqslant 2$). Let $T$ be a tree with $n+1$ vertices such that every vertex of it is either a leaf or adjacent to a leaf. Choose an arbitrary leaf vertex $v_0$ of $T$ and make $T$ rooted with vertex $v_0$ as the root.
	Let $s$ be the a farthest vertex to $v_0$ in $T$. So $s$ is a leaf vertex. Let $z$ be the parent vertex of $s$.
	Obviously, $z \not = v_0$ since otherwise $T \simeq P_2$ which is impossible since $T$ has at least $3$ vertices.
	It is easily seen that every child of $z$ is a leaf since otherwise $s$ wouldn't be the farthest vertex to $v_0$.
	Let $T'$ be the tree obtained from $T$ by removing all children of $z$ and edges adjacent to them.
	Obviously, $T'$ has at most $n$ vertices and easily seen that has at least $2$ vertices.
	Since by assumption, every vertex of $T$ is either a leaf or adjacent to a leaf, $T'$ has this property too.
	So by induction hypothesis, $T'$ is a DM graph.
	Now we consider $T$ again. Let $h$ be the parent vertex of $z$ in $T$.
	If $h=v_0$, then it is easily seen that $T$ would be the star tree with the central vertex $z$ and at least two leaves, and also $T'$ would be the path $P_2$. In this case, by applying Lemma \ref{addingcoleafpreservesDM}, we conclude that $T$ is a DM graph.
	If $h \not =v_0$, then $h$ would not be a leaf vertex of $T$.
	Since by assumption, every non-leaf vertex of $T$ is adjacent to a leaf,
	$h$ must be adjacent to a leaf vertex in $T$, say $g$. Note that $g \not= z$ since $z$ is not a leaf in $T$ since has at least one child $s$. Also $g \in V(T')$ since clearly $g$ is not a child of $z$.
	Therefore, $z$ and $g$ are two leaf vertices adjacent to $h$ in $T'$.
	Now by Lemma \ref{DMpreservesafteraddingleafstoavertexwhoseneighbshavesomeleafneighb}, $T$ is a DM graph since it can be obtained from $T'$ by adding a few leaves to $z$ (namely, the children of $z$ in $T$).
	This finishes the induction and follows the result of Step I.
	\hfill {\em Step I} $\square$
	
	\vspace{1mm}
	
	{\bf \em Step II:} The statement holds for every graph.
	
	{\em Proof of Step II:}
	As mentioned above, we may assume $G$ to be connected.
	By Proposition \ref{DMiff*everythreeDSisDS}, it would be enough to show that for every three dominating sets $A, B$ and $C$ of $G$, the set $*(A,B,C)$ is a dominating set of $G$.
	Let $T$ be some fixed arbitrary spanning tree of $G$.
	Since by assumption, every non-leaf vertex of $G$ is adjacent to a leaf, it is easy to see that the set of the leaves of $G$ and $T$ are the same. So the set of non-leaf vertices of them are also the same. For every non-leaf vertex $v$, denote the set of the leaves adjacent to it in $G$ by $L(v)$. Notice that $L(v)$ is the same as the set of the leaves adjacent to $v$ in $T$ and also $L(v) \not = \emptyset$. Since $A$ is a dominating set of $G$, for every non-leaf $v \in V(G)$, at least one of the two cases $v \in A$ or $L(v) \subseteq A$ happens. By these, it is easy to see that $A$ is a dominating set of $T$ too. The similar holds for $B$ and $C$ too.
	So far, we have shown that $A, B$ and $C$ are dominating sets of $T$ and every vertex of $T$ is either a leaf or adjacent to a leaf.
	Now by Step I, $T$ is a DM graph. Thus, by Proposition \ref{DMiff*everythreeDSisDS}, $*(A,B,C)$ is a dominating set of $T$. Hence, it is a dominating set of $G$ too. It follows the result.
	
	\vspace{1mm}
	
	$(\ref{MDScompMDS} \Rightarrow \ref{ENLOAL})$
	By Lemma \ref{Conectedcomponents}(\ref{AgraphisMDScompMDSiffeveryconnectedcomponentisMDScompMDS}), it would be enough to prove the statement for the case that $G$ is connected.
	We call a vertex $v$ of $G$ a {\em $*$-vertex} if there exists a maximal independent set (MIS), say $M$, and a vertex $w$ adjacent to $v$ such that $v \not \in M$ and $w \not \in M$.
	
	\vspace{0.75mm}
	
	{\bf \em Claim 1:}
	Every non-leaf $*$-vertex $v$ of $G$ is adjacent to a leaf.
	
	{\em Proof of Claim 1:} Since $v$ is $*$-vertex, there exist some MIS, say $M$, and a vertex $w$ adjacent to $v$ such that $v, w \not \in M$. Since every MIS is a minimal dominating set and also, by assumption, the complement of every minimal dominating set is a minimal dominating set, $M^c$ (i.e. $V(G)-M$) is a minimal dominating set.
	So every vertex in $M^c$ is either an isolated vertex of $\langle M^c \rangle$ (the induced subgraph on $M^c$), or has a private neighbor in $M$.
	Thus, since $v \in M^c$ and is not an isolated vertex in $\langle M^c \rangle$ (since it is adjacent to some vertex in $\langle M^c \rangle$, namely, $w$), it must have some private neighbor $s$ in $M$. Hence, $s$ is adjacent to $v$ and not adjacent to any other vertex in $M^c$. Also since $M$ is MIS, $s$ is not adjacent to any other vertex in $M$. It follows that $s$ is a leaf vertex in $G$. So $v$  is adjacent to a leaf vertex.
	\hfill {\em Claim 1} $\square$
	
	\vspace{0.75mm}
	
	{\bf \em Claim 2:} If a vertex $v$ is not a $*$-vertex in $G$, then every vertex of $G$ is either adjacent to all vertices in $N(v)$ or is not adjacent to any of them.
	
	{\em Proof of Claim 2:} Let $v$ be a not $*$-vertex and assume for contradiction that there is some vertex $z$ adjacent to a vertex $w_1 \in N(v)$ and non-adjacent to another vertex $w_2 \in N(v)$. Let $M$ be a maximal independent set of $G$ with $\{z,w_2\} \subseteq M$ (to find such $M$, it is enough to extend $\{z,w_2\}$ to a maximal independent set).
	Since $M$ is an independent set containing $w_2$, and $w_2$ is adjacent to $v$, $M$ does not contain $v$.
	Similarly, since $M$ contains $z$ and $w_1$ is adjacent to $z$, $M$ does not contain $w_1$.
	Therefore, $v$ is a vertex outside of the maximal independent set $M$ and is adjacent to the vertex $w_1 \not \in M$. It follows that $v$ is a $*$-vertex which is a contradiction.
	\hfill {\em Claim 2} $\square$
	
	\vspace{0.75mm}
	
	{\bf \em Claim 3:} 
	Assume that $v$ is a non-leaf vertex of $G$ which is not adjacent to any leaf. Then, $v$ is a $*$-vertex.
	
	{\em Proof of Claim 3:}
	Assume for contradiction that $v$ is a vertex of degree at least $2$ which is not adjacent to any leaf and is not $*$-vertex.
	So by above Claim 2, every vertex of $G$ is either adjacent to all vertices in $N(v)$ or is not adjacent to any of them.
	In particular, there is no edge between any two vertices in $N(v)$ since no vertex has a loop.
	Also, fixing some $w_0 \in N(v)$, we have
	
	$$\forall w \in N(v), \ N(w)=N(w_0). \ \ \ \ \  (1)$$
	Note that since $d(v) \geqslant 2$, we have $|N(v)| \geqslant 2$.
	Since $v$ is not adjacent to any leaf, every vertex in $N(v)$ has degree at least $2$. In particular, $|N(w_0)|\geqslant 2$.
	Hence, since there is no edge between any two vertices in $N(v)$, there must exists some $z \in V(G)-N[v]$ which is adjacent to $w_0$.
	So, by Claim 2, $z$ is adjacent to all vertices in $N(v)$. Thus, $d(z) \geqslant 2$ and $z$ is not a leaf  vertex. Similarly, every other vertex in $V(G)-N[v]$ which is adjacent to $w_0$ is not a leaf vertex.
	Therefore, $w_0$ is a non-leaf which is not adjacent to any leaf.
	So, by Claim 1, $w_0$ is not a $*$-vertex.
	Consequently, by Claim 2, every vertex of $G$ is either adjacent to all vertices in $N(w_0)$ or is not adjacent to any of them.
	Note that $v \in N(w_0)$. It follows that
	$$\forall r \in N(w_0), \ N(r)=N(v). \ \ \ \ \  (2)$$
	
	We want to show that $G$ is a complete bipartite graph.
	By using $(1)$ and $(2)$ above, it is not hard to verify that every vertex adjacent to some vertex in $N(v) \cup N(w_0)$ is already in $N(v) \cup N(w_0)$.
	Therefore, since $G$ is connected, we must have $V(G)=N(v) \cup N(w_0)$.
	On the other hand, we have $N(v) \cap N(w_0)=\emptyset$
	since otherwise if there is $x \in N(v) \cap N(w_0)$, then again by $(1)$ and $(2)$, we would have $N(x)=N(w_0)$ and $N(x)=N(v)$. Therefore, we would have $v \in N(w_0)=N(v)$ which would follow that $v \in N(v)$ which would be a contradiction. So $N(v) \cap N(w_0)=\emptyset$.
	Moreover, since, by (2), every two vertices in $N(w_0)$ must have the same open neighborhoods, there is no edge between any two vertices in $N(w_0)$. Also, as was shown above, there is no edge between any two vertices in $N(v)$.
	Moreover, by (1) and (2), obviously every vertex in $N(v)$ is adjacent to every vertex in $N(w_0)$ and vice versa.
	Putting together, $G$ is a complete bipartite graph with parts $A:=N(v)$ and $B:=N(w_0)$ where we remind from above that $|A| \geqslant 2$ and $|B| \geqslant 2$.
	
	Take $a \in A$ and $b \in B$.
	Obviously, $\{a,b\}$ is a minimal dominating set of $G$. So by assumption, $V(G)-\{a,b\}$
	must be a minimal dominating set. But since $G$ is complete bipartite with parts of size at least $2$, it is easily seen that we must have $|A|=|B|=2$. It implies that $G=C_4$ which is a contradiction.
	\hfill {\em Claim 3} $\square$
	
	\vspace{0.75mm}
	
	Now if $G$ has some non-leaf vertex $v$ which is not adjacent to any leaf, then by Claim 3, $v$ must be a $*$-vertex. So by Claim 1, $v$ must be adjacent to some leaf which is a contradiction. It follows that every vertex is either a leaf or adjacent to a leaf.
	
	\vspace{2mm}
	
	$(\ref{ENLOAL} \Rightarrow \ref{MDScompMDS})$
	For every non-leaf vertex $v$, denote the set of the leaves adjacent to $v$ by $L(v)$.
	It is not hard to verify that a subset $M \subseteq V(G)$ is a minimal dominating set of $G$ if and only if for every non-leaf $v \in V(G)$, exactly one of the two following cases takes place:
	
	- $v \in M$ and $L(v) \cap M=\emptyset$,
	
	- $v \not \in M$ and $L(v) \subseteq M$.
	
	Using this, it is not hard to see that for each minimal dominating set $M$, its complement set $V(G)-M$ is also a minimal dominating set. It follows the result.
	\hfill $\square$
\end{proof}

\vspace{1mm}

The following two immediate corollaries of Theorem \ref{thmDMiffMDScompMDSiffENLOAL}, characterize DM graphs (possibly with isolated vertices) and also summarize the characterization of MDScoMDS graphs.

\begin{cor}\label{characterizationofDMgraphs}
	Let $G$ be a graph (possibly with isolated vertices).
	Then, $G$ is DM if and only if every vertex of $G$ is either an isolated vertex, or a leaf, or is adjacent to a leaf.
\end{cor}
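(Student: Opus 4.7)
The plan is to derive Corollary \ref{characterizationofDMgraphs} from Theorem \ref{thmDMiffMDScompMDSiffENLOAL} by peeling off the two situations not directly covered by the theorem: isolated vertices and $C_4$ components. The bridge will be Lemma \ref{Conectedcomponents}(\ref{AgraphisDMiffeveryconnectedcomponentisDM}), which says that $G$ is DM if and only if each connected component of $G$ is DM. Since the condition on the right-hand side of the corollary also localizes to components (a vertex is isolated in $G$ iff it lies in a one-vertex component, and being a leaf or having a leaf neighbour is a local property), the entire statement reduces to the case of a connected graph.

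If the connected $G$ has a single vertex, then that vertex is isolated, $D(G)$ consists of one vertex (hence is trivially a median graph), and both sides of the claimed equivalence hold. Otherwise $G$ is connected with at least two vertices, so $G$ has no isolated vertex and the ``isolated'' clause on the right becomes vacuous: the claim becomes precisely that $G$ is DM iff every vertex of $G$ is a leaf or adjacent to a leaf. For $G \not\simeq C_4$ this is exactly the equivalence (\ref{graphisDM})$\Leftrightarrow$(\ref{ENLOAL}) of Theorem \ref{thmDMiffMDScompMDSiffENLOAL}, so nothing further is needed in this subcase.

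The main remaining obstacle, as expected, is the excluded $C_4$ case, but this can be dispatched by direct inspection. No vertex of $C_4$ is a leaf or adjacent to a leaf, so the right-hand side of the corollary fails for $G \simeq C_4$, and it is enough to show that $C_4$ is not DM. Labelling the vertices $v_1, v_2, v_3, v_4$ cyclically, the three pairs $A = \{v_1, v_2\}$, $B = \{v_1, v_3\}$, and $C = \{v_1, v_4\}$ are all dominating sets of $C_4$, yet $*(A, B, C) = (A \cap B) \cup (A \cap C) \cup (B \cap C) = \{v_1\}$, which does not dominate $v_3$. By Proposition \ref{DMiff*everythreeDSisDS}, $C_4$ is not DM, which completes the reduction and hence the proof.
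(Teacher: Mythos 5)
Your proposal is correct and follows essentially the same route as the paper: reduce to connected components via Lemma \ref{Conectedcomponents}(\ref{AgraphisDMiffeveryconnectedcomponentisDM}), note that $K_1$ is DM, rule out $C_4$ by exhibiting dominating sets whose $*$ is not dominating (your computation with $A=\{v_1,v_2\}$, $B=\{v_1,v_3\}$, $C=\{v_1,v_4\}$ is valid), and invoke the equivalence (\ref{graphisDM})$\Leftrightarrow$(\ref{ENLOAL}) of Theorem \ref{thmDMiffMDScompMDSiffENLOAL} for the remaining components. You have merely written out details the paper leaves to the reader.
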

\begin{proof}
	Note that the graph $K_1$ is DM.
	Also it is not hard to apply Proposition \ref{DMiff*everythreeDSisDS} and verify that the graph $C_4$ is not a DM graph.
	Combining with Theorem \ref{thmDMiffMDScompMDSiffENLOAL} and Lemma \ref{Conectedcomponents}(\ref{AgraphisDMiffeveryconnectedcomponentisDM}) yields the result.
	\hfill $\square$
\end{proof}

\begin{cor}\label{characterizationofMDScoMDSgraphs}
	Let $G$ be a graph.
	Then, $G$ is MDScoMDS if and only if either $G$ is isomorphic to $C_4$ or every vertex of $G$ is either a leaf or adjacent to a leaf.
\end{cor}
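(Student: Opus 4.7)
The plan is to deduce this corollary directly from Theorem~\ref{thmDMiffMDScompMDSiffENLOAL} by treating the two situations excluded from the theorem's hypotheses, namely $G\simeq C_4$ and $G$ having isolated vertices, via short separate arguments; the bulk of the work is already packaged inside the main theorem.

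For the forward (``if'') direction, I would split into two sub-cases. If $G\simeq C_4$, a direct inspection suffices: the minimal dominating sets of $C_4$ are exactly its six $2$-element subsets (the two pairs of opposite vertices and the four edges), and the complement of each such set is another $2$-element subset of the same type, hence again an MDS. If instead every vertex of $G$ is a leaf or adjacent to a leaf, then $G$ has no isolated vertex (every leaf has positive degree, and a vertex adjacent to a leaf trivially has a neighbour) and $G\not\simeq C_4$ (since no vertex of $C_4$ is a leaf or adjacent to a leaf), so Theorem~\ref{thmDMiffMDScompMDSiffENLOAL} applies and yields that $G$ is MDScoMDS.

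For the converse (``only if'') direction, assume $G$ is MDScoMDS; I want to conclude $G\simeq C_4$ or the vertex condition. First I would rule out isolated vertices: any isolated vertex $v$ must belong to every dominating set of $G$ (it can only be dominated by itself), hence to every MDS, so the complement of an MDS always excludes $v$ and fails to dominate it, contradicting the MDScoMDS assumption. Hence $G$ has no isolated vertex. If $G\simeq C_4$ we are done; otherwise Theorem~\ref{thmDMiffMDScompMDSiffENLOAL} gives exactly that every vertex of $G$ is a leaf or adjacent to a leaf.

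The main obstacle is essentially bookkeeping: cleanly isolating the boundary cases of $C_4$ and of isolated vertices from the regime covered by the main theorem, and doing the routine direct check for $C_4$. The piece that makes the two halves dovetail is the observation that the vertex condition by itself precludes both isolated vertices and $G\simeq C_4$, so the two alternatives on the right-hand side of the corollary partition the possibilities precisely into the theorem's regime and its excluded exceptional case.
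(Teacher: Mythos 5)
Your proof is correct and follows essentially the same route as the paper: verify directly that $C_4$ is MDScoMDS, observe that an isolated vertex lies in every minimal dominating set and hence obstructs the MDScoMDS property, and reduce everything else to Theorem~\ref{thmDMiffMDScompMDSiffENLOAL}. The only difference is that you spell out the routine verifications (the six minimal dominating sets of $C_4$ and the isolated-vertex argument) that the paper leaves to the reader.
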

\begin{proof}
	It is not hard to verify that $C_4$ is a MDScoMDS graph.
	Also note that if a graph has isolated vertex, then it can not be MDScoMDS.
	Combining these with Theorem \ref{thmDMiffMDScompMDSiffENLOAL} yields the result.
	\hfill $\square$
\end{proof}

\vspace{1mm}

The following corollary of Theorem \ref{thmDMiffMDScompMDSiffENLOAL},
makes a relation between median property of dominating graphs,
the notion of inverse domination number (see Definition \ref{dfninversedominationnumberofgraph})
and graphs $G$ with the property $\gamma(G)+\gamma^{-1}(G)=|V(G)|$.
Note that the class of graphs with property $\gamma(G)+\gamma^{-1}(G)=|V(G)|$ was considered in \cite{domkeDunbarMarkusTheinversedominationnumberofagraph},
and the following statement
clarifies the relationship between that class and graphs studied in this paper. 

\begin{cor}\label{inDMgraphsinversedominationnumberisequalnminusdominationnumber}
	Let $G$ be a graph with no isolated vertex and let $n:=|V(G)|$.
	If $G$ satisfies either of the
	conditions in the clauses of Theorem \ref{thmDMiffMDScompMDSiffENLOAL}, then $\gamma(G)+\gamma^{-1}(G)=n$.
	However, the converse does not necessarily hold.
\end{cor}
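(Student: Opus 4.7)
My plan is to derive the identity $\gamma(G)+\gamma^{-1}(G)=n$ from the MDScoMDS property, which each of conditions \ref{graphisDM}, \ref{MDScompMDS}, and \ref{ENLOAL} of Theorem \ref{thmDMiffMDScompMDSiffENLOAL} delivers (directly for condition \ref{MDScompMDS}, and via the theorem's equivalences together with Corollary \ref{characterizationofMDScoMDSgraphs} for conditions \ref{graphisDM} and \ref{ENLOAL}); then I would exhibit $G\simeq C_4$ as a counterexample to the converse.

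For the forward direction, I would let $D$ be any minimum dominating set of $G$; being minimal dominating and $G$ being MDScoMDS, the complement $V(G)-D$ is again a minimal dominating set, hence is itself an inverse dominating set with respect to $D$, which yields $\gamma^{-1}(G)\leqslant n-\gamma(G)$. For the reverse inequality, I would take an arbitrary inverse dominating set $B$: by definition $B\subseteq V(G)-D'$ for some minimum dominating set $D'$, and $B$ dominates $G$. Since $V(G)-D'$ is minimal dominating by MDScoMDS, no proper subset of it dominates $G$, which forces $B=V(G)-D'$ and $|B|=n-\gamma(G)$. Minimizing over all inverse dominating sets $B$ gives $\gamma^{-1}(G)\geqslant n-\gamma(G)$, completing the equality.

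For the failure of the converse, I would take $G\simeq C_4$: one has $\gamma(C_4)=2$, and choosing $D$ to be a pair of antipodal vertices, $V(C_4)-D$ is the other antipodal pair which again dominates $C_4$, so $\gamma^{-1}(C_4)=2$ and $\gamma(C_4)+\gamma^{-1}(C_4)=4=|V(C_4)|$. Nevertheless, $C_4$ is not DM (as recorded in Corollary \ref{characterizationofDMgraphs}) and has no leaves, so it satisfies neither condition \ref{graphisDM} nor condition \ref{ENLOAL} of Theorem \ref{thmDMiffMDScompMDSiffENLOAL}, witnessing that the converse can fail. The only non-routine step in the argument is the upgrade of the classical bound $\gamma^{-1}(G)\leqslant n-\gamma(G)$ (essentially Ore's observation) to an equality, and this upgrade is effected precisely by the minimality of $V(G)-D'$ guaranteed by MDScoMDS.
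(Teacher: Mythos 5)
Your forward direction is correct and is essentially the paper's own argument: you reduce to the MDScoMDS property (legitimately, since a graph with no isolated vertex satisfying condition \ref{graphisDM} or \ref{ENLOAL} cannot be $C_4$, so Theorem \ref{thmDMiffMDScompMDSiffENLOAL} yields condition \ref{MDScompMDS}), and then use the minimality of the complement of a minimum dominating set to force every inverse dominating set to equal that complement, giving $\gamma^{-1}(G)=n-\gamma(G)$. The paper runs this argument only for a smallest inverse dominating set, but the content is the same.

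Your counterexample to the converse, however, does not work. The converse to be refuted is: $\gamma(G)+\gamma^{-1}(G)=n$ implies that $G$ satisfies at least one of the three clauses. Condition \ref{MDScompMDS} is one of those clauses, and $C_4$ \emph{is} a MDScoMDS graph: its minimal dominating sets are exactly the $2$-element subsets, whose complements are again $2$-element subsets (this is recorded in Corollary \ref{characterizationofMDScoMDSgraphs}, and indeed the paper's proof of the corollary explicitly treats $C_4$ as a graph satisfying clause \ref{MDScompMDS} and verifies the equality for it). So $C_4$ satisfies one of the conditions and only refutes the converses of the individual implications from clauses \ref{graphisDM} and \ref{ENLOAL}, not the converse of the stated disjunction. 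What you need is a graph with $\gamma+\gamma^{-1}=n$ satisfying \emph{none} of the three clauses. The paper's example is the $7$-vertex graph consisting of a triangle $abc$ together with pendant edges $ad$, $ae$, $bf$, $bg$: its unique minimum dominating set is $\{a,b\}$, its unique inverse dominating set is $\{c,d,e,f,g\}$, so $\gamma+\gamma^{-1}=2+5=7=n$, yet $c$ is a non-leaf vertex adjacent to no leaf, so clause \ref{ENLOAL} fails, and since this graph is not $C_4$, clauses \ref{graphisDM} and \ref{MDScompMDS} fail as well by the theorem.
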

\begin{proof}
	If $G \simeq C_4$, then $G$ satisfies only condition \ref{MDScompMDS} (being MDScoMDS) and also it is fairly easy to see that in this case $\gamma(G)+\gamma^{-1}(G)=n$. So we may assume that $G \not \simeq C_4$ and in this case all three conditions of Theorem \ref{thmDMiffMDScompMDSiffENLOAL} would be equivalent for $G$. Assume that $G$ satisfies either of them.
	Let $A \subseteq V(G)$ be a smallest dominating set of $G$ whose complement contains a minimum dominating set, say $B$. Hence, by definition of inverse domination number, we have $\gamma^{-1}(G)=|A|$.
	Also obviously, $\gamma(G)=|B|$.
	Since $B$ is a minimum dominating set, $B$ is a minimal dominating set too, and since $G$ satisfies condition \ref{MDScompMDS} of Theorem \ref{thmDMiffMDScompMDSiffENLOAL}, the set $V(G)-B$ must be a minimal dominating set. But since $A \subseteq V(G)-B$ and $A$ is a dominating set, we must have $A=V(G)-B$. It follows that
	$\gamma^{-1}(G)=|A|=|V(G)-B|=n-\gamma(G)$.
	Therefore, $\gamma(G)+\gamma^{-1}(G)=n$.
	
	\vspace{1mm}
	
	Now we give an example to show that the converse does not necessarily hold.
	Let $G$ be the graph on $7$ vertices $\{a,b,c,d,e,f,g\}$ consisting of a cycle $C_3$ on vertices $a,b,c$ and four other edges $ad, ae, bf$ and $bg$.
	It is not hard to see that the only minimum dominating set of $G$ is the set $\{a,b\}$ and also the inverse domination number is equal to $5$ which is realized  by the set $\{c,d,e,f,g\}$.
	So, we have $\gamma(G)+\gamma^{-1}(G)=7$. But on the other hand, since $c$ is a vertex which is not a leaf and is not adjacent to any leaf, $G$ does not satisfy condition \ref{ENLOAL}, and therefore, none of the equivalent conditions of Theorem \ref{thmDMiffMDScompMDSiffENLOAL}.
	\hfill $\square$
\end{proof}

\vspace{1mm}

The following corollary shows that every graph $G$ can be extended to a DM and MDScoMDS graph which is not much bigger than $G$ and contains $G$ as an induced subgraph.
\begin{cor}
	For every graph $G$, there exists a graph $H$ with $|V(H)| \leqslant 2|V(G)|$ which is a DM and MDScoMDS graph and has $G$ as an induced subgraph.
\end{cor}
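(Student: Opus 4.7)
The plan is to take $H$ to be the corona product $G \odot K_1$, that is, the graph obtained from $G$ by attaching exactly one new pendant vertex to each vertex of $G$. Formally, for every $v \in V(G)$ introduce a fresh vertex $v'$ and add the edge $vv'$, leaving all existing edges of $G$ intact. Then $|V(H)| = 2|V(G)|$, and since no edges among the original vertices are added or removed and no new edges connect pairs of original vertices, $G$ sits inside $H$ as an induced subgraph.

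It remains to check the two structural properties. Each new vertex $v'$ has degree exactly one in $H$, so it is a leaf; each original vertex $v \in V(G) \subseteq V(H)$ is adjacent to its pendant $v'$, hence is adjacent to a leaf. Thus every vertex of $H$ is either a leaf or adjacent to a leaf; in particular, $H$ has no isolated vertex.

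By Corollary \ref{characterizationofDMgraphs}, the condition that every vertex is isolated, a leaf, or adjacent to a leaf is exactly what characterizes DM graphs, so $H$ is a DM graph. By Corollary \ref{characterizationofMDScoMDSgraphs}, the condition that every vertex is a leaf or adjacent to a leaf is (one of the two cases) sufficient for $H$ to be MDScoMDS, so $H$ is also MDScoMDS. This establishes all three requirements simultaneously. There is no real obstacle here beyond spotting the corona construction; once $H = G \odot K_1$ is written down, the verification is a one-line application of the characterizations proved earlier in the paper.
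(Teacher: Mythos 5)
Your proposal is correct and matches the paper's own argument: both take $H = G \odot K_1$, observe that every vertex of $H$ is a leaf or adjacent to a leaf, and invoke the characterizations already established (the paper cites Theorem \ref{thmDMiffMDScompMDSiffENLOAL} directly, which applies since $H$ has leaves and so is neither $C_4$ nor has isolated vertices; your use of Corollaries \ref{characterizationofDMgraphs} and \ref{characterizationofMDScoMDSgraphs} is equivalent). The only content in the paper not in your write-up is an optional refinement attaching pendants only to vertices that need them, giving $|V(H)| < 2|V(G)|$ when $G$ already has leaves, which is not required for the stated bound.
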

\begin{proof}
	Let $H$ be $G \odot K_1$, the corona product of $G$ by $K_1$. This product adds and connect a new leaf to any vertex of $G$ and so $|V(H)| = 2|V(G)|$.
	Note that if $G$ itself has some leaves, then we can do better and instead of the above corona product, define $H$ as follows. Corresponding to every non-leaf vertex of $G$ which is not adjacent to any leaf, add a new vertex to $G$ and connect it only to that vertex. Letting $H$ to be the obtained graph, we have $|V(H)| < 2|V(G)|$ in this case.
	
	In any case, $H$ has $G$ as an induced subgraph and $|V(H)| \leqslant 2|V(G)|$. Moreover, by Theorem \ref{thmDMiffMDScompMDSiffENLOAL}, $H$ is a DM graph and also MDScoMDS.
	\hfill $\square$
\end{proof}

\subsection{Dominating graphs and partial cubes}

The following result establishes the relationship between dominating graph of graphs and partial cubes.
Also, it provides us with some novel infinite sources of examples of partial cubes
since, as we will see, the dominating graph of every graph gives rise to a partial cube.
As mentioned in the introduction, finding new sources of examples of partial cubes 
has always been a topic of interest and was considered extensively in the literature.
In the following statement, we also give some examples to show that not all partial cubes or median graphs are isomorphic to the dominating graph of a graph.

\begin{thm}\label{Dominatinggraphsarepartialcube}
	The dominating graph $D_{|G|}(G)$ of any graph $G$ is a partial cube. On the other hand, there are partial cubes (and even median graphs) that are not isomorphic to the dominating graph of any graph.
\end{thm}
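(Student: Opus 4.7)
The plan for the first assertion is to exhibit an explicit isometric embedding of $D_{|G|}(G)$ into the hypercube on the ground set $V(G)$, using the tautological labeling in which each vertex of $D_{|G|}(G)$ is labeled by the very dominating set it represents. With this labeling, adjacency in $D_{|G|}(G)$ corresponds by definition to Hamming distance $1$, so by the second (subset/Hamming) characterization in Definition~\ref{defPartialCube}, it suffices to verify that for any two dominating sets $A$ and $B$ of $G$, the graph distance $d_{D_{|G|}(G)}(A,B)$ equals $|A \triangle B|$. The lower bound is immediate, since each edge of $D_{|G|}(G)$ changes the underlying subset by exactly one element, so $|A \triangle B|$ can drop by at most one per step along any walk.

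For the matching upper bound, the plan is to exhibit a walk of length exactly $|A \triangle B|$ from $A$ to $B$ all of whose intermediate subsets are dominating. The construction proceeds in two phases: first grow $A$ up to $A \cup B$ by inserting the elements of $B \setminus A$ one at a time in any order, then shrink $A \cup B$ down to $B$ by deleting the elements of $A \setminus B$ one at a time. Every set appearing in the first phase contains $A$ and is therefore a superset of a dominating set, hence dominating; every set in the second phase contains $B$ and is dominating for the same reason. The walk has length $|B \setminus A| + |A \setminus B| = |A \triangle B|$, so the embedding is isometric and $D_{|G|}(G)$ is a partial cube. A nearly identical path construction has in fact already been used inside the proof of Proposition~\ref{DMiff*everythreeDSisDS}, so the write-up can be kept short by referencing it.

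For the second assertion the plan is to exhibit a single example which is simultaneously a median graph (and hence a partial cube) but is not isomorphic to the dominating graph of any graph. The natural candidate is the $4$-cycle $C_4$, which is the hypercube $Q_2$ and in particular a median graph. The argument will be by contradiction: assume $D_{|G|}(G) \simeq C_4$ for some $G$. The vertex corresponding to $V(G)$ lies in $D_{|G|}(G)$, and its degree there equals the number of $v \in V(G)$ such that $V(G)\setminus\{v\}$ is still dominating, which is precisely the number of non-isolated vertices of $G$. Since every vertex of $C_4$ has degree $2$, the graph $G$ has exactly two non-isolated vertices $v_1, v_2$, and these must be adjacent to one another (else they would be isolated), forming a $K_2$-component. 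Writing $W$ for the set of isolated vertices of $G$, one checks that the dominating sets of $G$ are exactly the sets $W \cup S$ with $\emptyset \neq S \subseteq \{v_1, v_2\}$, giving only three dominating sets and contradicting $|V(C_4)| = 4$. The principal obstacle is really only this final degree-and-counting argument; the rest is a direct unpacking of the definitions.
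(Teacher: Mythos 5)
Your proof of the first assertion is essentially identical to the paper's: the same two-phase path (grow $A$ to $A\cup B$, then shrink to $B$), with the observation that every intermediate set contains a dominating set, giving $d(A,B)=|A\triangle B|$ under the tautological labeling. Your explicit mention of the lower bound $d(A,B)\geq|A\triangle B|$ is a small but welcome addition that the paper leaves implicit.

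For the second assertion you take a genuinely different and, I think, cleaner route. The paper uses the stars $K_{1,n}$ for $n\geq 3$: it first argues that the full vertex set $V(G)$ must map to the center, deduces that every proper dominating set has size $|V(G)|-1$, concludes that $G$ is a complete graph plus isolated vertices, and then checks that such graphs do not have $K_{1,n}$ as dominating graph. You instead use $C_4\simeq Q_2$, which is a median graph, and exploit the single clean fact that the degree of the vertex $V(G)$ in $D_{|G|}(G)$ equals the number of non-isolated vertices of $G$ (since its only potential neighbours are the sets $V(G)\setminus\{v\}$, and such a set dominates precisely when $v$ is not isolated). The $2$-regularity of $C_4$ then forces $G$ to be $K_2$ plus isolated vertices, which has only three dominating sets --- a contradiction. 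Your degree computation is correct, the deduction that the two non-isolated vertices must be adjacent to each other is sound, and the count of dominating sets ($W$ must lie in every dominating set, and at least one of $v_1,v_2$ must be chosen) is right. What the paper's choice buys is an infinite family of counterexamples of unbounded size; what yours buys is brevity and a minimal example. Both are valid proofs of the stated theorem.
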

\begin{proof}
	For simplicity, denote the dominating graph $D_{|G|}(G)$ by $H$.
	Let $D_1$ and $D_2$ be two vertices of $H$ (so they are dominating sets of $G$).
	We construct a path $P$ in $H$ between $D_1$ and $D_2$ as follows:
	\begin{enumerate}
		\item{Start from $D_1$ and add one by one (according to an arbitrary ordering) every vertex in $D_2 - D_1$ until reaching to $S_1:=D_1 \cup D_2$. Since every set in this process contains $D_1$, they are all dominating sets.}
		
		\item{Remove one by one (according to an arbitrary ordering) every vertex in $D_1-D_2$ from $S_1$ 
			until reaching to $D_2$. Since every set in this process contains $D_2$, they are all dominating sets.}
	\end{enumerate}

	Since every set obtained in above steps of moving from $D_1$ to $D_2$ on the path $P$ is a dominating set, $P$ is a path in $H$ connecting $D_1$ to $D_2$. Also it is easily seen that the length of $P$ is $|D_1 \triangle D_2|$.
	It follows that $H$ is a partial cube.
	
	Now we show that there are partial cubes that are not isomorphic to the dominating graph of any graph.
	It is easy to see that any graph $K_{1,n}$ is a partial cube and even more, is a median graph.
	We show that for $n \geqslant 3$, the graph $K_{1,n}$ is not isomorphic to the dominating graph of any graph.
	
	Assume for contradiction that for some $n\geqslant 3$, the graph $K_{1,n}$ is isomorphic to $H$ with some isomorphism $f:V(H) \rightarrow V(K_{1,n})$ for some graph $G$ where we again use $H$ for denoting $D_{|G|}(G)$.
	Let $c$ be the central vertex of $K_{1,n}$.
	Also let $V:=V(G)$.
	Since $V(G)$ is a dominating set of $G$, obviously $V$ is a vertex of $H$. Let $a:=f(V)$.
	
	\vspace{1mm}
	
	{\bf \em Claim :} $a=c$.
	
	{\em Proof of Claim:}
	Assume for contradiction that $a \not =c$. Then, $a$ is a leaf vertex of $K_{1,n}$ and so $V$ is a leaf vertex of $H$. Let $D$ be the vertex in $H$ such that $f(D)=c$.
	Since $a$ is adjacent to $c$ in $K_{1,n}$, $V$ must be adjacent to $D$ in $H$. It follows that there is $v \in G$ such that $D =V- \{v\}$. 
	Let $b$ be another (different from $a$) leaf vertex adjacent to $c$ in $K_{1,n}$ and assume that $D' \in V(H)$ be such that $f(D')=b$.
	So $D'$ is adjacent to $D$ in $H$.
	Since $D=V-\{v\}$, it is not hard to see that $D'=D -\{w\}$ for some $w \in D$.
	Thus, $D'=V-\{v,w\}$.
	Since $D'$ is a dominating set of $G$, the set $D'':=D' \cup \{v\}=V-\{w\}$ is also a dominating set of $G$ and so $D'' \in V(H)$. Also obviously $D''$ is different from $D$ and $V$ and moreover, it is adjacent to $V$ in $H$ since $V=D'' \cup \{w\}$. Now $V$ is adjacent to both vertices $D$ and $D''$ in $H$ which is a contradiction since $V$ was a leaf vertex of $H$. Therefore, $a=c$.
	\hfill {\em Claim} $\square$
	
	\vspace{1mm}
	
	Since by above claim $f(V)=c$, and $c$ is adjacent to every other vertex in $K_{1,n}$, $V$ must be also adjacent to every other vertex in $H$. It follows that every dominating set of $G$ except $V$ is of size $|G|-1$. Thus, no subset of size $|G|-2$ of $V(G)$ is a dominating set of $G$. Hence, for every $z,t \in V(G)$, at least one of $z$ or $t$, say $z$, is not adjacent to any vertex in $V(G)-\{z,t\}$. So, either $z$ is an isolated vertex of $G$ or $z$ and $t$ are adjacent to each other.
	Since the same happens for every two vertices of $G$, then every two non-isolated vertices of $G$ are adjacent to each other.
	Therefore, $G$ must be a union of a complete graph and some isolated vertices.
	But it is not difficult to verify that in such graphs, the dominating graph $H$ is not isomorphic to $K_{1,n}$ for any $n \geqslant 3$. This is a contradiction. It follows that for any $n \geqslant 3$, the graph $K_{1,n}$ is not isomorphic to the dominating graph of any graph.
	Therefore, there are partial cubes and median graphs which are not isomorphic to the dominating graph of any graph.
	\hfill $\square$
\end{proof}

\subsection{Discussion}

In relation with the main Theorems \ref{thmDMiffMDScompMDSiffENLOAL} and \ref{Dominatinggraphsarepartialcube}
(in which the dominating graph $D_{|G|}(G)$ of a graph $G$ was under consideration), it is worth to mention that
for $k < |G|$, the graph $D_k(G)$ is not necessarily partial cube (and therefore in that case not median either).
More precisely, if $k <|G|$, then the graph $D_k(G)$ might not even be connected, in which case can not be partial cube or median. We give an example. Let $G$ be a graph with at least one edge.
So, by Remark \ref{everygraphwithatleastoneedgehasatleasttwoMDS}, $G$ has at least two distinct minimal dominating sets. Let $M$ be a largest minimal dominating set of $G$ and let $k:=|M|$. Then, $D_k(G)$ has at least two vertices. We claim that $D_k(G)$ is not connected. The reason is that $M$, as a vertex of $D_k(G)$, is not adjacent to any other vertex in $D_k(G)$ since every vertex adjacent to $M$ must either have exactly one vertex less than $M$ or exactly one vertex more. It is easy to see that since $M$ is minimal dominating set with $|M|=k$, both cases are impossible.

\vspace{3mm}


\begin{thebibliography}{10}
\expandafter\ifx\csname url\endcsname\relax
  \def\url#1{\texttt{#1}}\fi
\expandafter\ifx\csname urlprefix\endcsname\relax\def\urlprefix{URL }\fi

\bibitem{AdarichevaBozemanClarkeHaasMessingerSeyffarthSmithReconfigurationgrap%
hsfordominatingsets2021}
K.~Adaricheva, C.~Bozeman, N.~E. Clarke, R.~Haas, M.-E. Messinger,
  K.~Seyffarth, H.~C. Smith, Reconfiguration graphs for dominating sets, in:
  Research Trends in Graph Theory and Applications, Springer, 2021, pp.
  119--135.

\bibitem{AlikhaniFatehiKlavzarOntheStructureofDominatingGraphs}
S.~Alikhani, D.~Fatehi, S.~Klavzar, On the structure of dominating graphs,
  Graphs Combin. 33 (2017) 665--672.

\bibitem{AlikhaniFatehiMynhardtOnktotaldominatinggraphs}
S.~Alikhani, D.~Fatehi, C.~M. Mynhardt, On $k$-total dominating graphs,
  Australas . J. Combin. 73 (2019) 313--333.

\bibitem{AvannMetricternarydistributivesemilattices}
S.~P. Avann, Metric ternary distributive semi-lattices, Proc. Amer. Math. Soc.
  12 (1961) 407--414.

\bibitem{BandeltChepoiMetricgraphtheoryandgeometryasurvey}
H.~J. Bandelt, V.~Chepoi, Metric graph theory and geometry: a survey, Surveys
  on Discrete and Computational Geometry: Twenty Years Later, J.E. Goodman, J.
  Pach, and R. Pollack (eds), Contemp. Math. 453 (2008) 49--86.

\bibitem{BirkhoffandKissAternaryoperationindistributivelattices}
G.~Birkhoff, S.~A. Kiss, A ternary operation in distributive lattices, Bull.
  Amer. Math. Soc. 53 (1947) 749--752.

\bibitem{FindingpathsbetweengraphcolouringsPSPACEcompletenessandsuperpolynomia%
ldistances}
P.~Bonsma, L.~Cereceda, Finding paths between graph colourings:
  Pspace-completeness and superpolynomial distances, Theor. Comput. Sci. 410
  (50) (2009) 5215--5226.

\bibitem{CerecedavandenHeuvelJohnsonFindingpathsbetween3colorings}
L.~Cereceda, J.~van~den Heuvel, M.~Johnson, Finding paths between 3-colorings,
  J. Graph Theory 67 (2011) 69--82.

\bibitem{CerecedavandenHeuvelJohnsonConnectednessofthegraphofvertexcolourings}
L.~Cereceda, J.~van~den Heuvel, J.~M., Connectedness of the graph of
  vertex-colourings, Discrete Math. 308 (2008) 913--919.

\bibitem{ChalopinChepoiHiraiOsajdaWeaklymodulargraphsandnonpositivecurvature}
J.~Chalopin, V.~Chepoi, H.~Hirai, D.~Osajda, Weakly modular graphs and
  nonpositive curvature, Memoirs of AMS 268 (2020) no. 1309.

\bibitem{Chepoi}
V.~D. Chepoi, d-convexity and isometric subgraphs of {H}amming graphs,
  Cybernetics 1 (1988) 6--9.

\bibitem{Djokovic}
D.~Djokovic, Distance preserving subgraphs of hypercubes, J. Comb. Theory Ser.
  B 14 (1973) 263--267.

\bibitem{domkeDunbarMarkusTheinversedominationnumberofagraph}
G.~S. Domke, J.~E. Dunbar, L.~R. Markus, The inverse domination number of a
  graph, Ars Combin. 72 (2004) 149--160.

\bibitem{EppsteinThelatticedimension}
D.~Eppstein, The lattice dimension of a graph, European J. Combin. 26 (2005)
  585--592.

\bibitem{bookmediatheory}
D.~Eppstein, J.~C. Falmagne, S.~Ovchinnikov, Media Theory, Springer-Verlag,
  Berlin, 2008.

\bibitem{BookFederStableNetworksandProductGraphs}
T.~Feder, Stable Networks and Product Graphs, Mem. Amer. Math. Soc. 555, 1995.

\bibitem{frendrupHenningOnaconjectureaboutinversedominationingraphs}
A.~Frendrup, M.~A. Henning, B.~Randerath, P.~D. Vestergaard, On a conjecture
  about inverse domination in graphs, Department of Mathematical Sciences,
  Aalborg University (2009).

\bibitem{GrahamPollak}
R.~L. Graham, H.~Pollak, On the addressing problem for loop switching, Bell
  System Tech. J. 50 (1971) 2495--2519.

\bibitem{HaasSeyffarthThekdominatinggraph}
R.~Haas, K.~Seyffarth, The $k$-dominating graph, Graphs Combin. 30 (2014)
  609--617.

\bibitem{HaasSeyffarthReconfiguringdominatingsetsinsomewellcoveredandotherclas%
sesofgraphs}
R.~Haas, K.~Seyffarth, Reconfiguring dominating sets in some well-covered and
  other classes of graphs, Discrete Math. 340 (2017) 1802--1817.

\bibitem{BookHammackImrichKlavzarHandbookofProductGraphs}
R.~Hammack, W.~Imrich, S.~Klavzar, Handbook of Product Graphs, Discrete Math.
  Appl. CRC press, Boca Raton, 2nd edition, 2011.

\bibitem{BookHaynesHedetniemiHenningTopicsindominationingraphs}
T.~Haynes, S.~Hedetniemi, M.~Henning, Topics in domination in graphs, vol.~64,
  Springer, 2020.

\bibitem{BookHaynesHedetniemiHenningStructuresofdominationingraphs}
T.~Haynes, S.~Hedetniemi, M.~Henning, Structures of domination in graphs,
  vol.~66, Springer, 2021.

\bibitem{BookHaynesHedetniemiSlaterFundamentalsofDominationinGraphs}
T.~Haynes, S.~Hedetniemi, P.~Slater, Fundamentals of Domination in Graphs,
  Marcel Dekker, New York, 1998.

\bibitem{HenningLowensteinRautenbachRemarksaboutdisjointdominatingsets2009}
M.~A. Henning, C.~L{\"o}wenstein, D.~Rautenbach, Remarks about disjoint
  dominating sets, Discrete Math. 309~(23-24) (2009) 6451--6458.

\bibitem{BookImrichKlavzarProductGraphs}
W.~Imrich, S.~Klavzar, Product Graphs: Structure and Recognition, Wiley, New
  York, 2000.

\bibitem{ItoDemaineHarveyPapadimitriouSideriUeharaUnoOnthecomplexityofreconfig%
urationproblems}
T.~Ito, E.~D. Demaine, N.~J.~A. Harvey, C.~H. Papadimitriou, M.~Sideri,
  R.~Uehara, Y.~Uno, On the complexity of reconfiguration problems, Theoret.
  Comput. Sci. 412(12-14) (2011) 1054--1065.

\bibitem{ItoKaminskiDemaineReconfigurationoflistedgecoloringsinagraph}
T.~Ito, M.~Kaminski, E.~D. Demaine, Reconfiguration of list edge-colorings in a
  graph, in: Algorithms and data structures, LNCS, vol. 5664, Springer, Berlin
  (2009) 375--386.

\bibitem{johnsonPrierWalshHaynesOnaproblemofDomkeDunbarHaynesHedetniemiandMark%
usconcerningtheinversedominationnumber}
P.~Johnson~Jr, D.~Prier, M.~Walsh, T.~Haynes, On a problem of domke, dunbar,
  haynes, hedetniemi, and markus concerning the inverse domination number, AKCE
  International Journal of Graphs and Combinatorics 7~(2) (2010) 217--222.

\bibitem{KlavzarMulderMediangraphscharacterizationslocationtheoryandrelatedstr%
uctures}
S.~Klavzar, H.~M. Mulder, Median graphs: characterizations, location theory and
  related structures, J. Combin. Math. Combin. Comput. 30 (1999) 103--127.

\bibitem{KlavzarShpectorovTribesofcubicpartialcubes}
S.~Klavzar, S.~Shpectorov, Tribes of cubic partial cubes, Discrete Math. Theor.
  Comput. Sci. 9 (2007) 273--291.

\bibitem{BookKnuthTheArtofComputerProgramming}
D.~E. Knuth, "Median algebras and median graphs", The Art of Computer
  Programming, vol. 4A, Fascicle 0: Introduction to Combinatorial Algorithms
  and Boolean Functions., Addison-Wesley, pp. 64-74, 2008.

\bibitem{KropMcDonaldPuleoUpperboundsforinversedominationingraphs2021}
E.~Krop, J.~McDonald, G.~J. Puleo, Upper bounds for inverse domination in
  graphs, Theory and Applications of Graphs 8 (2) (2021) Article 5.

\bibitem{KulliSigarkantiInversedominationingraphs1991}
V.~Kulli, S.~Sigarkanti, Inverse domination in graphs, Nat. Acad. Sci. Lett. 14
  (1991) 473--475.

\bibitem{lowenstein2010complementInthecomplementofadominatingsetPhDThesis}
C.~L{\"o}wenstein, In the complement of a dominating set, Ph.D. thesis, PhD
  Dissertation, Technische Universitat Ilmenau, Germany (2010).

\bibitem{MofidiOnpartialcubeswellgradedfamiliesandtheirdualswithsomeapplicatio%
nsingraphs}
A.~Mofidi, On partial cubes, well-graded families and their duals with some
  applications in graphs, Discrete Appl. Math, 283 (2020) 207--230.

\bibitem{BookMulderTheIntervalFunctionofaGraph}
H.~Mulder, The Interval Function of a Graph, Centre Tracts 132 (Amsterdam),
  1980.

\bibitem{MynhardtRouxTeshimaConnectedkdominatinggraphs}
C.~Mynhardt, L.~Teshima, A.~Roux, Connected $k$-dominating graphs, Discrete
  Math 342~(1) (2019) 145--151.

\bibitem{MynhardtNasserasrReconfigurationofcolouringsanddominatingsetsingraphs}
C.~M. Mynhardt, S.~Nasserasr, Reconfiguration of colourings and dominating sets
  in graphs, In F. Chung, R. Graham, F. Hoffman, L. Hogben, R. Mullin, and D.
  B. West, editors, 50 Years of Combinatorics, Graph Theory, and Computing.
  Chapman and Hall/CRC, 2019.

\bibitem{NebeskyMediangraphs}
L.~Nebesky, Median graphs, Comment. Math. Univ. Carolinae 12 (1971) 317--325.

\bibitem{OreTheoryofgraphs}
O.~Ore, Theory of graphs, in: Amer. Math. Soc. Transl., 38, Amer. Math. Soc.,
  Providence, RI (1962) 206--212.

\bibitem{OvchinnikovPartialcubesStructurescharacterizations}
S.~Ovchinnikov, Partial cubes: Structures, characterizations, and
  constructions, Discrete Math. 308 (2008) 5597--5621.

\bibitem{bookpartialcubes}
S.~Ovchinnikov, Graphs and Cubes, Springer Science, Business Media, 2011.

\bibitem{SuzukiMouawadNishimuraReconfigurationofdominatingsetsinCoCOON}
A.~Suzuki, A.~Mouawad, N.~Nishimura, Reconfiguration of dominating sets, Z.
  Cai, A. Zelikovsky, A. Bourgeois (Eds.), COCOON 2014, LNCS, vol. 8591,
  Springer, Heidelberg, (2014) 405--416.

\bibitem{SuzukiMouawadNishimuraReconfigurationofdominatingsets}
A.~Suzuki, A.~Mouawad, N.~Nishimura, Reconfiguration of dominating sets, J.
  Comb. Optim. 32 (2016) 1182--1195.

\bibitem{BookvandeVelTheoryofConvexStructures}
M.~van~de Vel, Theory of Convex Structures, Elsevier Science Publishers,
  Amsterdam, 1993.

\bibitem{WinklerIsometricembeddingsinproductsofcompletegraphs}
P.~Winkler, Isometric embeddings in products of complete graphs, Discrete Appl.
  Math. 7 (1984) 221--225.

\end{thebibliography}

\def\germ{\frak} \def\scr{\cal} \ifx\documentclass\undefinedcs
  \def\bf{\fam\bffam\tenbf}\def\rm{\fam0\tenrm}\fi 
  \def\defaultdefine#1#2{\expandafter\ifx\csname#1\endcsname\relax
  \expandafter\def\csname#1\endcsname{#2}\fi} \defaultdefine{Bbb}{\bf}
  \defaultdefine{frak}{\bf} \defaultdefine{=}{\B} 
  \defaultdefine{mathfrak}{\frak} \defaultdefine{mathbb}{\bf}
  \defaultdefine{mathcal}{\cal} \defaultdefine{implies}{\Rightarrow}
  \defaultdefine{beth}{BETH}\defaultdefine{cal}{\bf} \def\bbfI{{\Bbb I}}
  \def\mbox{\hbox} \def\text{\hbox} \def\om{\omega} \def\Cal#1{{\bf #1}}
  \def\pcf{pcf} \defaultdefine{cf}{cf} \defaultdefine{reals}{{\Bbb R}}
  \defaultdefine{real}{{\Bbb R}} \def\restriction{{|}} \def\club{CLUB}
  \def\w{\omega} \def\exist{\exists} \def\se{{\germ se}} \def\bb{{\bf b}}
  \def\equivalence{\equiv} \let\lt< \let\gt>

\end{document}